\author{Kieran Jarrett}
\newtheoremstyle{standard}
  {11pt} 
  {} 
  {\itshape} 
  {} 
  {\bfseries} 
  {} 
  {.5em} 
  {} 
\theoremstyle{standard}
\newtheorem{theorem}{Theorem}[section]
\newtheorem{lem}[theorem]{Lemma}
\newtheorem{cor}[theorem]{Corollary}
\newtheorem{defn}[theorem]{Definition}
\newtheorem{prop}[theorem]{Proposition}
\newtheorem*{theorem*}{Theorem}
\newtheoremstyle{rem}
  {9pt} 
  {} 
  {} 
  {} 
  {\bfseries} 
  {} 
  {.5em} 
  {} 
\theoremstyle{rem}
\newtheoremstyle{rems}
  {9pt} 
  {} 
  {} 
  {} 
  {\bfseries} 
  {} 
  {.5em} 
  {} 
\theoremstyle{rems}
\newtheoremstyle{e.g.}
  {9pt} 
  {} 
  {} 
  {} 
  {\itshape} 
  {} 
  {.5em} 
  {} 
\theoremstyle{e.g.}
\newtheorem*{e.g.}{E.g.}
\newcommand{\ind}{\normalfont \textbf{1}}
\newcommand{\rmin}{\textup{rmin\,}}
\newcommand{\rmax}{\textup{rmax\,}}
\newcommand{\tb}[3]{\partial_{#1} B_{#2}(#3)}
\newcommand{\re}[1]{\textup{Re\,}{#1}}
\newcommand{\im}[1]{\textup{Im\,}{#1}}
\numberwithin{equation}{section}
\begin{document}

\title{An ergodic theorem for non-singular actions of the Heisenberg groups}
\date{}
\maketitle

\begin{abstract}
We show that there is a sequence of subsets of each discrete Heisenberg group for which the non-singular ergodic theorem holds. The sequence depends only on the group; it works for any of its non-singular actions. To do this we use a metric which was recently shown by Le Donne and Rigot to have the Besicovitch covering property and then apply an adaptation of Hochman's proof of the multiparameter non-singular ergodic theorem. An exposition of how one proves non-singular ergodic theorems of this type is also included, along with a new proof for one of the key steps.
\end{abstract}

\section{Introduction}

One of the fundamental problems in ergodic theory is to establish when an average of a function over an orbit in a dynamical system - the time average - agrees with its average over the whole space. Birkhoff's pointwise ergodic theorem states that this is the case when the dynamics are described by repeated applications of a transformation to a finite measure space, so long as the mass of each set is preserved. This theorem serves as a foundation for the class of pointwise ergodic theorems, each of which modifies or generalises this Birkhoff's result. The main result of this paper sits between two such generalisations. 

The first extends the notion of time. In Birkhoff's theorem time is discrete,  with a fixed map describing where each point in the space will be after each unit time step. In particular, if this map is invertible it induces an action of the integers on the underlying space. One can change the notion of time by considering actions of groups other than the integers, such as the reals inducing continous time. In this paper we start by considering the action of a countable group $G$ which will eventually be taken to be the discrete Heisenberg group.

The second generalisation weakens the assumption that the dynamics preserve the mass of all sets. Instead we only assume that the action is non-singular, i.e. only that mass of the null sets (those without mass) are preserved. This means that sets with positive mass can have their mass changed by the dynamics, but will not lose their mass entirely. The cost for this weakening is that time average must be weighted differently, in particular by Radon-Nikodým derivatives.

More precisely, we take $G$ to be a countable group with a non-singular and ergodic action on a standard probability space $(X,\mathcal{B},\mu)$. Each $g \in G$ then induces a non-singular map on $X$ which we also denote by $g$. The measures $\mu$ and $\mu \circ g$ are equivalent and so the Radon-Nikodým derivatives
$$\omega_g = \frac{d \mu \circ g}{d \mu}$$
are well defined and strictly positive almost everywhere. For $f \in L^1$ and $g \in G$ we let $\hat{g}f(x) = f(gx)\omega_g(x)$ for each $x \in X$.

Given such an action and a sequence ${e \in B_1 \subseteq B_2 \subseteq ...}$ of finite subsets of $G$, which we refer to as a \textit{summing sequence}, we say that \textit{the (pointwise) ergodic theorem is satisfied for the sequence $(B_k)$} if for every integrable function $f$, we have
\begin{align*}
\lim_{k \rightarrow \infty} \frac{\sum_{g \in B_k} \hat{g}f}{\sum_{g \in B_k} \hat{g}1} = \int f \, d\mu
\end{align*}
almost everywhere. 

For example, in the case of Birkhoff's theorem $B_k = \lbrace 1,...,k \rbrace$ and all the Radon-Nikodým derivatives are identically $1$, reducing the average on the left hand side to $\frac{1}{k} \sum_{i=1}^k f(ix)$ - the unweighted average over the first $k$ points in the forward orbit of $x$.

The main theorem of this paper is the following.

\begin{theorem} \label{HeisET}
There is a metric $d$ on the discrete Heisenberg group $\mathrm{H}^n = H_n(\mathbb{Z})$ for which the ergodic theorem is satisfied with $B_k = \lbrace p \in \mathrm{H}^n : d(p,0) \leq k \rbrace$.
\end{theorem}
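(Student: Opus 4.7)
The strategy is the standard transfer-plus-density argument, with the Besicovitch covering property (BCP) playing the role usually filled by a Vitali-type lemma. I will take $d$ to be the metric on $\mathrm{H}^n$ constructed by Le Donne and Rigot, which is known to have BCP, and set $B_k = \{p \in \mathrm{H}^n : d(p,0) \le k\}$. Because $B_k$ is the closed $d$-ball, the family $\{B_k\}$ is nested, exhaustive, and $\{gB_k : g \in \mathrm{H}^n\}$ is a family of $d$-balls to which BCP applies; these are the only geometric inputs that Hochman's argument actually consumes.

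The first step is to set up the averaging operators $A_k f = (\sum_{g \in B_k} \hat g f)/(\sum_{g \in B_k} \hat g 1)$ and the associated maximal operator $M f = \sup_k A_k |f|$. Using the Chacon--Ornstein-type device that re-expresses these weighted averages in terms of the product measure $\mu$ on $X \times \mathrm{H}^n$ (writing $\sum_g \hat g f(x)$ as an integral of $f$ against the pushforward of $\mu$ under a suitable map), one reduces the weak-type $(1,1)$ estimate for $M$ to a covering inequality on $\mathrm{H}^n$. The constant-multiplicity form of BCP for the balls $B_k$ produces exactly such an inequality: any finite family of balls centred at points of some set $E$ can be refined to a subfamily of bounded multiplicity that still covers $E$, which transfers to the statement that $\mu\{Mf > \lambda\} \le C\|f\|_1/\lambda$. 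This is the step where the choice of Le Donne--Rigot's metric, rather than an arbitrary left-invariant metric, is crucial, because BCP is known to fail for natural sub-Riemannian choices on $\mathrm{H}^n$.

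Next I would identify a dense class of $f \in L^1(\mu)$ on which $A_k f$ converges almost everywhere to $\int f\,d\mu$. In the non-singular setting the correct class is obtained from the Hopf decomposition / Chacon--Ornstein theorem: coboundaries $f = h - \hat g_0 h$ for $g_0 \in \mathrm{H}^n$ together with invariant functions (constants, by ergodicity) span a dense subspace of $L^1$, and on coboundaries a telescoping argument controlled by boundary estimates of $B_k$ forces $A_k f \to 0$. Here I would need the family $(B_k)$ to have asymptotically negligible boundary in the sense $|B_k \triangle g_0 B_k|/|B_k| \to 0$ for each fixed $g_0$; this is a Følner-type property of large balls in a group of polynomial growth, and is immediate from the volume asymptotics of $d$-balls in $\mathrm{H}^n$.

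Finally, combining the weak-type maximal inequality with a.e.\ convergence on the dense class via the Banach principle yields the theorem for all $f \in L^1(\mu)$. The main obstacle is the first step: verifying that Hochman's multiparameter argument, originally formulated for $\mathbb{Z}^d$ with Euclidean boxes, carries over verbatim once BCP is available in the non-abelian setting, and checking that the weighted (non-singular) averages interact correctly with the covering lemma. Everything else---the Følner property, the density of coboundaries, and the Banach principle---is either classical or a routine consequence of the polynomial volume growth of the Le Donne--Rigot balls.
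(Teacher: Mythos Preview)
Your outline has the right architecture (maximal inequality via BCP, density of coboundaries, Banach principle), but the step you flag as ``routine'' is in fact the heart of the matter and your treatment of it is wrong. For a coboundary $h-\hat\sigma h$ the telescoping identity gives
\[
A_k(h-\hat\sigma h)=\frac{\sum_{g\in B_k\setminus\sigma B_k}\hat g h-\sum_{g\in\sigma B_k\setminus B_k}\hat g h}{\sum_{g\in B_k}\omega_g},
\]
so what must vanish a.e.\ is the \emph{weighted} ratio
\[
\frac{\sum_{g\in B_k\triangle\sigma B_k}\omega_g(x)}{\sum_{g\in B_k}\omega_g(x)},
\]
not the unweighted $|B_k\triangle\sigma B_k|/|B_k|$. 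The ordinary F{\o}lner property, which is indeed immediate from polynomial growth, says nothing about this weighted quantity: the cocycle $\omega_g(x)$ can concentrate mass on the boundary annulus in a way that counting measure cannot detect. Hochman's $\mathbb{Z}^\infty$ counterexample is a F{\o}lner sequence for which the ratio theorem nonetheless fails, so the F{\o}lner condition alone is provably insufficient.

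Relatedly, you assert that BCP is the only geometric input Hochman's argument consumes. It is not. BCP feeds the maximal inequality, but the convergence on the dense class in Hochman's $\mathbb{Z}^d$ paper rests on a separate ``finite coarse dimension'' property of the balls (bounding how many nested thickened spheres can share a point), together with a well-separability property. The paper's work consists precisely in establishing the Heisenberg analogue of that dimension property for the Le Donne--Rigot metric; this is a delicate geometric argument specific to the shape of $d$-spheres and does not follow from BCP or polynomial growth. Your plan, as written, omits the mechanism that actually forces the non-singular F{\o}lner condition to hold.
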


See section \ref{Heissetup} for the precise definition of the metric $d$. We expect that similar techniques can be used to show that the corresponding result holds for the continuous Heisenberg group $\mathbb{H}^n$.

\subsubsection*{Background}

The ergodic theorem was extended separately in both of the contexts mentioned earlier. In 1944 Hurewicz proved the non-singular version of Birkhoff's theorem \cite{Hur1}, that with $G = \mathbb{Z}$ and $B_k = \lbrace 1,...,k \rbrace$, is satisfied so long as the action is conservative. The case of measure preserving actions of amenable groups was resolved far more recently by Lindenstrauss \cite{Lind1} who proved that the theorem is satisfied whenever $(B_k)$ is a tempered F{\o}lner sequence. A short inductive proof, given in \cite{Lind1}, shows that every F{\o}lner sequence has a tempered subsequence. It therefore follows that every measure preserving action of an amenable group has a summing sequence for which the ergodic theorem holds. 

Knowing these results, it is natural to ask whether given a non-singular and ergodic action of an amenable group there exists a summing sequence for which the ergodic theorem holds. In contrast with the case of integer actions, there is not a direct analogue to Lindenstrauss's result for non-singular actions; we cite work providing a counterexample below. However, there have been extensions to actions of $\mathbb{Z}^n$. 

In his paper \cite{Fel1} Feldman used an elegant method to show that the ergodic theorem holds when the summing sets $B_k$ are taken to be the balls $\lbrace u \in \mathbb{Z}^n : \|u \| \leq k \rbrace$, where $\| \cdot \|$ is the sup-norm, so long as the standard generators $e_1,...,e_n$ of $\mathbb{Z}^n$ act conservatively on $X$. Shortly after, Hochman \cite{Hoch1} used a different approach to remove this additional assumption and allow $\| \cdot \|$ to be any norm on $\mathbb{R}^n$ - under the relatively light assumption that the action is free. In recent work  with Anthony Dooley \cite{DoolJar1}, the author showed that the balls of norms can be replaced with rectangles which are symmetric around the origin with side lengths growing at arbitrarily quick and distinct rates.

There are also examples for which the ergodic theorem fails. In \cite{Hoch2}, Hochman shows that for $G = \mathbb{Z}^\infty = \bigoplus_{n=1}^\infty \mathbb{Z}$ and any choice of summing sequence there is an infinite measure  preserving action of $\mathbb{Z}^\infty$ for which the \textit{ratio} ergodic theorem fails (in fact, the ratio is shown to diverge). By transferring to an equivalent probability measure, and recalling that under our assumptions the ratio ergodic theorem is a consequence of the ergodic theorem, it follows that the ergodic theorem fails for some action of $\mathbb{Z}^\infty$ regardless of the choice of summing sequence. In the same paper it is also shown that if $G$ is taken to be the discrete Heisenberg group and $B_k = B^k$, where $B$ is the collection of standard generators of $G$, then the ratio ergodic theorem fails in a similar way for every subsequence of $(B_k)$.

The key obstacle to the validity of the ergodic theorem cited in \cite{Hoch2} is the failure of the sequence $(B_k)$ to satisfy the Besicovitch covering property (BCP), see definition \ref{BCP}. In the case of the Heisenberg group this property does not just fail for the sequence $(B^k)$, as described above, it also fails for the integer balls of the Kor\'{a}nyi distance and the Carnot-Carath\'{e}odory metric (see \cite{Rig1,SawWhee1}). These are two of the natural distances on the Heisenberg group.

However, in soon to be published work, Le Donne and Rigot \cite{LeDRig1} have show that there is a class of homogeneous metrics on the Heisenberg group which do satisfy the BCP. We recall the definition of one of these metrics in section \ref{Heissetup} and will take it to be the metric $d$ in theorem \ref{HeisET}. Though the same techniques should work for any metric in the class identified in \cite{LeDRig1} we treat just one for notational simplicity. Knowing that $d$ satisfies the BCP allows us to employ a framework to prove Theorem \ref{HeisET} developed from the one employed by Hochman in his study of $\mathbb{Z}^n$. 

The question underlying this and similar work is: given $G$ can we find a summing sequence $(B_k)$ for which the non-singular ergodic theorem holds regardless of the action? Hochman's work with $\mathbb{Z}^\infty$ means that the generality achieved by Lindenstrauss cannot replicated for non-singular actions, even in the context of abelian groups. The class of groups with for which the answer is yes is strictly smaller. On the other hand, it includes $\mathbb{Z}^n$ and $\mathrm{H}^n$ both of which are `finite dimensional' unlike $\mathbb{Z}^\infty$. These groups have various useful geometrical properties (see section 2), many of which are inherited from either $\mathbb{R}^n$ or the continous Heisenberg group $\mathbb{H}^n$. As such, it would be interesting to know whether a similar result may hold with $G = \mathbb{Q}^n$ or, more generally, countable abelian groups of finite rank. 

\hfill

\textit{Acknowledgements.} This work will be submitted as part of the author's PhD thesis and was supported by the EPSRC, UK. The author thanks his supervisor Tony Dooley for our useful discussions and the University of Technology, Sydney, for their hospitality whilst much of the work was conducted. 

\subsubsection*{Paper layout}

In section 2 we identify a small collection of geometrical properties on a group metric space which suffice to prove a non-singular ergodic theorem when $(B_k)$ is its sequence of integer balls, and then give a relatively self-contained exposition of how one does this. This includes a new proof of the main technical result from \cite{Hoch1}.

In section 3 we show that the discrete Heisenberg group satisfies each of these properties, and hence deduce Theorem \ref{HeisET}. Most of the work in this section will go into showing that the Heisenberg group equipped with the specified metric satisfies a condition based on the finite coarse dimension property used in \cite{Hoch1}.

\section{A method for proving non-singular ergodic theorems}

In this section we give a fairly self-contained exposition of a method for proving non-singular ergodic theorems when $G$ can equipped with right invariant metric $d$ with ``good" geometrical properties. This method follows the standard approach; we first show that there is a dense subset of $L^1$ on which the ergodic theorem holds and then extend to the whole of $L^1$ by using a maximal inequality. The geometry plays a central role both in the proof of the existence of such a dense set and of the maximal inequality. The summing sequence used will be the sequence of closed integer balls about the identity, i.e. the sets $B_k = B_k(e) = \lbrace g \in G : d(g,e) \leq k \rbrace$ for $k \in \mathbb{N}$. We require each of these sets to be finite.

We draw on the ideas of \cite{Aar1} and \cite{Fel1} which give elegant expositions of aspects of the method. The approach to the part specialised to this setting is based on that in \cite{Hoch1}, and in particular we provide a new proof of the main technical result in that paper.

\subsection{Construction of the dense subset}

The candidate for the dense subset of $L^1$ for which the ergodic theorem holds is
$$S = \lbrace c + h - \hat{\sigma}h : c \in \mathbb{R}, \sigma \in G, h \in L^\infty \rbrace.$$
To see that $S$ is dense in $L^1$ first recall that $L^\infty(\mu)$ can be identified with the dual of $L^1(\mu)$ and observe that $S$ is a linear subspace of $L^1(\mu)$. If $S$ is not dense we can apply the Hahn-Banach theorem to construct an $f \in L^\infty(\mu) \setminus \lbrace 0 \rbrace$ for which $\int sf \, d\mu = 0$ for all $s \in S$. It follows that it is enough to show that every $f \in L^\infty(\mu)$ such that $\int sf \, d\mu = 0$ for all $s \in S$ satisfies $f = 0$ a.e.\ . With this in mind, let us fix such an $f$. By considering the case $c = 0$ we have 
$$\int h f \, d\mu = \int (\hat{\sigma}h) f \, d\mu = \int h(x) f(\sigma^{-1}x) \, d\mu(x)$$  
for all $h \in L^\infty(\mu)$ and $\sigma \in G$. Since $L^\infty(\mu)$ is dense in $L^1$ it follows that $f \circ \sigma = f$ a.e.\ for all $\sigma \in G$, so by the ergodicity of the action $f$ is constant a.e.\ . It is easy to see this constant must be $0$, as required.

Since the integral of any $c + h - \hat{\sigma}h \in S$ is just $c$ and
$$\frac{\sum_{g \in B_k} \hat{g}(c+h - \hat{\sigma}h)}{\sum_{g \in B_k} \hat{g}1} 
= c + \frac{\sum_{g \in B_k \setminus \sigma B_k} \hat{g}h - \sum_{g \in \sigma B_k\setminus  B_k} \hat{g}h}{\sum_{g \in B_k} \hat{g}1}$$ 
as each $h \in L^\infty$ it is sufficient to prove that for all $\sigma \in G$
\begin{align*}
\frac{\sum_{g \in B_k \triangle \sigma B_k} \omega_g}{\sum_{g \in B_k} \omega_g} \rightarrow 0 \qquad \text{a.s.,} \tag{nsFC} \label{nsFC}
\end{align*}
which we will call the \textit{non-singular F{\o}lner condition} (nsFC). The proof of (nsFC) will be the focus for the rest of this section, and the approach we will use is developed from the one in \cite{Hoch1}.

In fact, this will follow largely from the metric structure we impose on the space so let us take $(M,d)$ to be a metric space and make some definitions. 

For $r > 0$ and $x \in M$ we let $B_r(x) = \lbrace y \in X : d(x,y) \leq r \rbrace$, the closed ball of radius $r$ about $x$, and we assume each such ball carries the information about its centre and radius along with it. Let $\mathcal{V}$ be a collection of balls in $M$, we let $\rmax{\mathcal{V}}$ and $\rmin{\mathcal{V}}$ denote the maximum and minimum radii, respectively, of the balls in $\mathcal{V} $. We say the collection $\mathcal{V}$ is \textit{well-separated} if the distance between each of the balls in $\mathcal{V}$ is at least $\rmin{\mathcal{V}}$. Given a finite set $E \subset X$ a \textit{carpet} over $E$ is a collection of balls $\mathcal{U} = \lbrace B_{r(x)}(x) : x \in E \rbrace$ centred in $E$. A \textit{stack} (of height $p$) over $E$ is a sequence of carpets $\mathcal{U}_1,...,\mathcal{U}_p$ over $E$. The first geometrical property we will require of the group metric space is the following.

\begin{defn}
$(M,d)$ is \emph{well-separable} if there exists $\chi \in \mathbb{N}$ such that for every finite set $E \subset M$ and every carpet $\mathcal{U}$ over $E$ there is a subcollection $\mathcal{V}$ of $\mathcal{U}$ which covers $E$ and which can be partitioned into $\chi$ well-separated subcollections.
\end{defn}

The motivation for this definition is that we immediately get the following statement, essentially by the pigeonhole principle.

\begin{lem} 
Suppose that $(M,d)$ is well-separable with constant $\chi$ and assume there is a given finite measure $\nu$ supported in a set $E$. Then given any carpet $\, \mathcal{U}$ there is well-separated subset of $\, \mathcal{U}$ which covers a set of mass $\geq (1/\chi) \nu(E)$.
\end{lem}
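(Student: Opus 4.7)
The plan is to combine the definition of well-separability with a direct pigeonhole argument on the measure $\nu$.

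First I would apply the well-separability hypothesis to the given carpet $\mathcal{U}$ and the finite set $E$ on which $\nu$ is supported. This produces a subcollection $\mathcal{V} \subseteq \mathcal{U}$ which covers $E$ together with a partition $\mathcal{V} = \mathcal{V}_1 \sqcup \dots \sqcup \mathcal{V}_\chi$ into $\chi$ well-separated subcollections. Let $A_i = \bigcup \mathcal{V}_i$ denote the union of the balls in the $i$th piece.

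Next I would use that $\nu$ is supported in $E$ and that $\mathcal{V}$ covers $E$. This gives $E \subseteq A_1 \cup \dots \cup A_\chi$, hence
\[
\nu(E) \leq \nu(A_1 \cup \dots \cup A_\chi) \leq \sum_{i=1}^\chi \nu(A_i).
\]
By the pigeonhole principle, there exists some index $i$ for which $\nu(A_i) \geq (1/\chi) \nu(E)$. The subcollection $\mathcal{V}_i$ is well-separated by construction and covers a set of mass at least $(1/\chi)\nu(E)$, which is what we wanted.

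There is no real obstacle here; the only conceptual point worth stating cleanly is the interpretation of ``$\nu$ supported in $E$'' so that $E \subseteq \bigcup \mathcal{V}$ translates into the inequality $\nu(\bigcup \mathcal{V}) \geq \nu(E)$. Once that is made explicit the argument is just the definition of well-separability followed by pigeonholing the $\chi$ pieces against the total mass $\nu(E)$.
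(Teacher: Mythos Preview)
Your proof is correct and matches the paper's approach exactly: the paper does not give a formal proof but simply remarks that the lemma follows ``essentially by the pigeonhole principle,'' which is precisely what you have written out.
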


Now we define a concept of boundary which is in general distinct from the one given in \cite{Hoch1}, but which coincide in the context of that paper. Essentially the same definition is also used in \cite{DoolJar1}. Let $(\tilde{M},\tilde{d})$ be a metric space such that $M \subseteq \tilde{M}$ and $d = \tilde{d}|_M$, and we say that $(\tilde{M},\tilde{d})$ \textit{extends} $(M,d)$ and that $(M,d)$ is a \textit{restriction} of $(\tilde{M},\tilde{d})$ when this occurs. In this situation we then define the \textit{$t$-boundary} $\partial_t B_r(x)$ (with respect to $\tilde{M}$), where $t \geq 0$, of a ball $B_r(x)$ in $M$ by 
$$\partial_t B_r(x) = \lbrace y \in M : \tilde{d}(y,\partial \tilde{B}_r(x)) \leq t \rbrace.$$
where $\tilde{B}_r(x) = \lbrace y \in \tilde{M} : \tilde{d}(x,y) \leq r \rbrace$ and $\partial \tilde{B}_r(x) = \lbrace y \in \tilde{M} : \tilde{d}(x,y) = r \rbrace$. We also assume that spheres $\partial \tilde{B}_r(x)$ retain the information about their centre and radius. The intention is that $\tilde{M}$ will be to $M$ what $\mathbb{R}^n$ is to $\mathbb{Z}^n$.

For ease, when $(\tilde{M},\tilde{d})$ extends $(M,d)$ and we refer to points or sets being a given distance apart, this means in the metric $\tilde{d}$. Clearly this is a lower bound for the $d$-distance apart.

Given a collection $\mathcal{V}$ of balls in $M$ with extension $\tilde{M}$ we let ${\partial \mathcal{V} = \lbrace \partial \tilde{B}_r(x) : B_r(x) \in \mathcal{V} \rbrace}$, a collection of spheres in $\tilde{M}$. We also call the collection $\partial \mathcal{V}$ \textit{well-separated} if the distance between each of the spheres in $\partial \mathcal{V}$ is at least $\rmin{\mathcal{V}}$. The distinction here to the case of balls is that some spheres in $\partial \mathcal{V}$ may lie inside the balls corresponding to distict spheres in $\mathcal{V}$.

We can now prove the following crucial lemma.

\begin{lem} \label{boundgen}
Let $(M,d)$ be well separable with constant $\chi$ and extension $(\tilde{M},\tilde{d})$. Let $\epsilon, \delta \in (0,1)$, $t \geq 0$ and $p = \left\lceil \frac{2 \chi}{\epsilon \delta} \right\rceil$. Suppose that
\begin{enumerate}[\normalfont (1)]
\item $\nu$ is a finite measure on $M$,
\item $F \subseteq M$ is finite and $\nu(F) > \delta \nu(M)$,
\item $\mathcal{U}_1,...,\mathcal{U}_p$ is a stack over $F$ with $\rmin{\mathcal{U}_i} > \, 2 \, \rmax{\mathcal{U}_{i-1}}$ and $\rmin{\mathcal{U}_1} > 2t$, and
\item $\nu(\partial_tB) > \epsilon \, \nu(B)$ for each $B \in \bigcup_i \mathcal{U}_i$
\end{enumerate}
then there is some integer $k \geq 2$ and a subcollection $\mathcal{V} \subseteq \bigcup_{i \geq k} \mathcal{U}_i$ such that
\begin{enumerate}[\normalfont (i)]
\item $\partial\mathcal{V}$ is well-separated and
\item $\nu \left(F \cap \bigcup_{B \in \mathcal{V}} \partial_{2r} B \right) >  \frac{1}{2} \, \nu(F)$, where $r = \rmax{\mathcal{U}_{k-1}}$.
\end{enumerate}
\end{lem}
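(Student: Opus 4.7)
The plan is to extract well-separated subcollections from each carpet via the preceding well-separability lemma, bound the boundary mass at each level using hypothesis (4), and then run a pigeonhole argument across the $p$ levels to find a suitable $k$.

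First, for each $i\in\{1,\ldots,p\}$ I apply the preceding lemma to the restricted measure $\nu|_F$, obtaining a well-separated subcollection $\mathcal V_i\subseteq\mathcal U_i$ with $\nu(F\cap\bigcup\mathcal V_i)\ge\tfrac{1}{\chi}\nu(F)$. Since the balls in $\mathcal V_i$ are pairwise disjoint (well-separation implies positive distance between closed balls), the boundary spheres $\partial\mathcal V_i$ are also well-separated with the same parameter $\rmin\mathcal V_i$. In particular, taking $\mathcal V=\mathcal V_k$ for some single $k$ would satisfy condition (i) automatically; the task reduces to ensuring (ii).

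Second, hypothesis (4) combined with pairwise disjointness of the sets $\partial_tB$ for $B\in\mathcal V_i$ (which follows from $\rmin\mathcal V_i\ge\rmin\mathcal U_i>2t$) yields the per-level estimate
\[
\nu\!\left(\bigcup_{B\in\mathcal V_i}\partial_tB\right)>\epsilon\,\nu\!\left(\bigcup\mathcal V_i\right)\ge\tfrac{\epsilon}{\chi}\nu(F).
\]
The growth conditions $\rmin\mathcal U_k>2\rmax\mathcal U_{k-1}=2r$ and $\rmin\mathcal U_1>2t$ imply $t<r/2<2r$, so $\partial_tB\subseteq\partial_{2r}B$ for every $B\in\mathcal V_i$ with $i\ge k\ge2$; any $\partial_t$-mass is therefore also $\partial_{2r}$-mass.

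The crux is choosing $k$ so that (ii) holds. Summing the per-level bound across $i$ gives total boundary mass at least $p\cdot\tfrac{\epsilon}{\chi}\nu(F)\ge\tfrac{2}{\delta}\nu(F)>2\nu(M)$, which forces the sets $S_i:=\bigcup_{B\in\mathcal V_i}\partial_tB$ to cover $M$ with average multiplicity strictly greater than $2$. I expect the main obstacle to be converting this global multiplicity estimate into the $F$-restricted bound demanded by (ii), since hypothesis (4) controls $\nu(\partial_tB)$ rather than $\nu(F\cap\partial_tB)$. The enlargement of the boundary from thickness $t$ to $2r=2\rmax\mathcal U_{k-1}$ should be essential: a ball from the finer carpet $\mathcal U_{k-1}$ centred in $F$ near the sphere of some $B\in\mathcal V_k$ lies inside $\partial_{2r}B$, so the $F$-coverage provided by $\mathcal V_{k-1}$ can be traded into the thickened boundary at level $k$. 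If no single $\mathcal V_k$ suffices, a greedy construction running from $i=p$ downward, adding balls while maintaining $\rmin$-separation of spheres by exploiting the factor-two gap in radii across carpets, should produce the desired $\mathcal V\subseteq\bigcup_{i\ge k}\mathcal U_i$ at the cost of only a controlled loss of mass, still above the $\tfrac12\nu(F)$ threshold ensured by the pigeonhole choice of $p$.
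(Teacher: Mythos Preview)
Your outline has the right large-scale picture---build $\mathcal V$ from the top carpets downward and stop once enough of $F$ is captured---but the concrete mechanism is missing, and the preliminary per-level choice of $\mathcal V_i$ does not lead to a proof.

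The difficulty you flag is real: the bound from hypothesis~(4) is on $\nu(\partial_tB)$, not on $\nu(F\cap\partial_tB)$, so your multiplicity estimate $\sum_i\nu(S_i)>2\nu(M)$ says nothing about $F$-coverage. More importantly, picking the $\mathcal V_i$ independently in advance gives you no way to glue them into a single collection with $\partial\mathcal V$ well-separated: a ball in $\mathcal V_{k-1}$ can sit right on a sphere of $\mathcal V_k$. The factor-two radius gap alone does not prevent this; what prevents it is knowing where the \emph{centres} of the smaller balls lie relative to the larger spheres.

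The missing idea is that the two problems are solved by the same trick. Run the construction recursively from the top: having built $\mathcal V\subseteq\bigcup_{i>p-l}\mathcal U_i$, ask whether $\nu\bigl(F\cap\bigcup_{B\in\mathcal V}\partial_{2r(l)}B\bigr)>\tfrac12\nu(F)$ with $r(l)=\rmax\mathcal U_{p-l}$. If yes, stop. If not, set $E=F\setminus\bigcup_{B\in\mathcal V}\partial_{2r(l)}B$, so $\nu(E)\ge\tfrac12\nu(F)>\tfrac\delta2$, and apply well-separability to the carpet $\mathcal U_{p-l}$ restricted to centres in $E$. The new balls $\mathcal U'$ have centres at distance $>2r(l)$ from every sphere in $\partial\mathcal V$, which simultaneously (a) makes $\partial(\mathcal V\cup\mathcal U')$ well-separated and (b) makes the new $\partial_t$-boundaries disjoint from the old ones, so the total $\partial_t$-mass increases by at least $\tfrac{\epsilon\delta}{2\chi}$. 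After $p$ failures this mass exceeds $\nu(M)$, a contradiction. Your ``greedy construction'' gesture is in this direction, but without routing the new centres through the complement $E$ you have neither separation nor additivity.
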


\begin{proof}
We follow \cite{Hoch1}. Wlog $\nu(M) = 1$. We work up recursively from $l = 0$ and in stage $l$ we produce a collection $\mathcal{V} \subseteq \bigcup_{i > p - l} \mathcal{U}_i$ with $\partial \mathcal{V}$ well-separated and 
$$\nu \left( \bigcup_{B \in \mathcal{V}} \partial_{2r(l)} B \right) 
\geq \nu \left(\bigcup_{B \in \mathcal{V}} \partial_{t} B \right)
\geq 	\frac{\epsilon\delta}{2\chi} l$$ 
where $r(l) = \rmax{\mathcal{U}_{p-l}}$.

For $l=0$ take $\mathcal{V} = \emptyset$. Assume we have completed stage $l$. If
$$\nu \left( F \cap \bigcup_{B \in \mathcal{V}} \partial_{2r(l)} B \right) > \frac{1}{2} \, \nu(F)$$
then take $k = p - l + 1$ and we are done. Otherwise let $E = F \setminus \bigcup_{B \in \mathcal{V}} \partial_{2r(l)} B$ and note that $\nu(E) \geq \frac{1}{2} \nu(F) \geq \frac{\delta}{2}$. By the previous lemma we may choose a well-separated subcollection of balls $\mathcal{U}' \subseteq \mathcal{U}_{p-l}$ with $\nu \left( \mathcal{U}' \right) > \frac{\delta}{2 \chi}$. As $2t < r(l)$ it follows from (4) that
$$\nu\left(\bigcup_{B \in \mathcal{U}'} \partial_t B \right) > \epsilon \, \nu\left(\bigcup_{B \in \mathcal{U}'} B \right) \geq \frac{\epsilon\delta}{2 \chi}.$$
The centres of each $B \in \mathcal{U}'$ are strictly more than $2 \, r(l) > 4 \, r(l+1)$ from each element of $\partial \mathcal{V}$. Firstly this ensures $\partial \mathcal{V} \cup \partial \mathcal{U}'$ is well-separated. Secondly it means that
\begin{align*}
\nu\left( \bigcup_{B \in \mathcal{V} \cup \mathcal{U}'} \partial_{2r(l+1) } B \right) 	
&=		\nu\left( \bigcup_{B \in \mathcal{V}} \partial_{2r(l+1) } B \right)
		+ 
		\nu\left( \bigcup_{B \in \mathcal{U}'} \partial_{2r(l+1) } B \right) \\
&\geq	\nu\left( \bigcup_{B \in \mathcal{V}} \partial_{t} B \right)
		+ 
		\nu\left( \bigcup_{B \in \mathcal{U}'} \partial_{t} B \right)	\\
&> 		\frac{\epsilon\delta}{2\chi} (l+1)
\end{align*}
and so we can complete the recursive step by adding $\mathcal{U}'$ to $\mathcal{V}$.

The process must terminate by stage $l = p-1$, ensuring $k \geq 2$. If it does not then we can complete stage $p-1$. Then, as above, we see that 
$$\nu\left( \bigcup_{B \in \mathcal{V} \cup \mathcal{U}'} \partial_{t} B \right)  > \frac{\epsilon\delta}{2\chi} p \geq 1.$$
\end{proof}

The reason Lemma \ref{boundgen} is crucial is that we are going to repeatedly apply it to produce a series of collections of thickened boundaries, each containing a not insignificant portion of a finite set $F$, and then seek to apply the following property.

\begin{defn}
Let $(M,d)$ have extension $(\tilde{M},\tilde{d})$. We say $(M,d)$ has \emph{finite intersection dimension} (with respect to $\tilde{M}$) if there is a positive integer $\kappa$ and an $R > 1$ such that given
\begin{enumerate}[\normalfont (a)]
\item $t(1),...,t(\kappa) \geq 1$,
\item $r(1),...,r(\kappa)$ such that each $r(i) \geq t(1)...t(i)R$ and
\item points $x_1,...,x_\kappa \in M$ such that $x_i \in \bigcap_{j < i} \partial_{t(j)} B_{r(j)}(x_j)$ for all $i \leq \kappa$
\end{enumerate}
then $\bigcap_{i=1}^\kappa \partial_{t(i)} B_{r(i)}(x_i) = \emptyset$. In this case, we say that $(M,d)$ has \emph{intersection dimension $\kappa$ at scale $R$}.
\end{defn}

Note that if $(\tilde{M},\tilde{d})$ has intersection dimension $\kappa$ at scale $R$ with respect to $(\tilde{M},\tilde{d})$ then so too do all its restrictions. 

It is important to note here that the intersection dimension of a space is a minor reformulation of the coarse dimension defined by Hochman in \cite{Hoch1}, and uses a different notion of boundary. The two quantities are in fact the same in that paper's setting. The reason for using the name `intersection dimension' is simply to avoid potential confusion with another quantity called the coarse dimension, used in coarse geometry, as it is not clear there is a connection between the two. 

We can now give a new proof of Theorem 4.4 in \cite{Hoch1}, with a slight improvement in the bound for the height of the stack required.

\begin{theorem} \label{maintech}
Let $(M,d)$ be well separable with constant $\chi$ and have intersection dimension $\kappa \in \mathbb{N}_0$ at scales $R > 1$ with respect to an extension $(\tilde{M},\tilde{d})$. Let $0 < \epsilon, \delta < 1$. Then the following holds for some positive integer $q \leq \kappa \left(\frac{2 \sqrt{2} \chi}{\epsilon \delta} \right)^\kappa (\sqrt{2})^{\kappa^2}$. Suppose that
\begin{enumerate}[\normalfont (1)]
\item $\nu$ is a finite measure on $M$,
\item $F \subseteq M$ is finite,
\item $\mathcal{U}_1,...,\mathcal{U}_q$ is a stack over $F$ with
\begin{enumerate}[\normalfont (a)]
\item $\rmin{\mathcal{U}}_i > 2(\rmax{\mathcal{U}_{i-1}})^2$,
\item $\rmin{\mathcal{U}}_1 > 7\max{(t,R)}$,
\end{enumerate}
\item $\nu(\partial_tB) > \epsilon \, \nu(B)$ for each $B \in \bigcup_i \mathcal{U}_i$.
\end{enumerate}
Then $\nu(F) \leq \delta \, \nu(M)$.
\end{theorem}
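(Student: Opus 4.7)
I would argue by contradiction, assuming $\nu(F) > \delta\,\nu(M)$ and constructing a chain of balls $B^{(1)},\dots,B^{(\kappa)}$ whose existence violates the intersection dimension hypothesis. Inductively, at the end of stage $i$ I maintain points $x_1,\dots,x_i \in F$ with $B^{(j)} := B_{r(j)}(x_j)$, thickening parameters $t(1),\dots,t(i)$, and a subset $F_i \subseteq F$ with $\nu(F_i) > \delta_i\,\nu(M)$ satisfying $F_i \subseteq \bigcap_{j \leq i}\partial_{t(j)}B^{(j)}$. By always choosing $x_{i+1} \in F_i$ at the next stage, the chain condition $x_{i+1} \in \bigcap_{j \leq i}\partial_{t(j)}B^{(j)}$ is automatic. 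After $\kappa$ stages $F_\kappa$ is non-empty; any $y \in F_\kappa$ provides $y \in \bigcap_{i=1}^\kappa \partial_{t(i)}B^{(i)}$, contradicting the intersection dimension conclusion that this intersection is empty.

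Stage $i$ proceeds as follows. I apply Lemma \ref{boundgen} to the measure $\nu$, the set $F_{i-1}$, and the next consecutive block of $p_i := \lceil 2\chi/(\epsilon\,\delta_{i-1})\rceil$ carpets, each restricted to be centred in $F_{i-1}$ (a valid restriction as $F_{i-1} \subseteq F$). Conditions (1)--(4) of the lemma are inherited from the theorem's hypotheses. The lemma produces $\mathcal V_i$ with $\partial\mathcal V_i$ well-separated and
$$
\nu\!\left(F_{i-1} \cap \bigcup_{B \in \mathcal V_i}\partial_{2r_i}B\right) > \tfrac{1}{2}\,\nu(F_{i-1}),
$$
where $r_i = \rmax\mathcal U_{k_i-1}$ for some $k_i \geq 2$ in the block. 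The squared growth $\rmin\mathcal U_j > 2(\rmax\mathcal U_{j-1})^2$ together with $r_i \geq \rmin\mathcal U_1 > 7R > 7$ gives $\rmin\mathcal V_i > 2r_i^2 > 4r_i$, which rules out overlap among the thickened spheres $\{\partial_{2r_i}B : B \in \mathcal V_i\}$: an overlap would bring two distinct elements of $\partial\mathcal V_i$ within $4r_i$ of each other, contradicting well-separation. By pigeonhole on these disjoint sets I pick $B^{(i)} \in \mathcal V_i$ maximising $\nu(F_{i-1} \cap \partial_{2r_i}B)$, then set $t(i) := 2r_i$, $F_i := F_{i-1} \cap \partial_{t(i)}B^{(i)}$, and let $x_i$ be the centre of $B^{(i)}$ (which lies in $F_{i-1}$ because the restricted carpets are over $F_{i-1}$). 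A short induction using the squared growth and $\rmin\mathcal U_1 > 7R$ shows that $r(i) > 2r_i^2 \geq t(1)\cdots t(i)R$, as needed to invoke the definition of intersection dimension at the end.

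The hardest part is the quantitative bookkeeping needed to match the stated bound on $q$. After the pigeonhole, $\delta_i$ shrinks by a factor related to $|\mathcal V_i|$; this cardinality must be controlled using the disjointness of the well-separated balls of $\mathcal V_i$ (so $\sum_B \nu(B) \leq \nu(M)$) and the thickness condition $\nu(\partial_t B) > \epsilon\,\nu(B)$. A careful accounting of $\delta_i/\delta_{i-1}$ then fixes the required block size $p_i$, and summing over $i = 1,\dots,\kappa$ should give the compound bound $q \leq \kappa(2\sqrt 2\,\chi/(\epsilon\,\delta))^\kappa(\sqrt 2)^{\kappa^2}$. Balancing the pigeonhole loss against the super-exponential growth of carpet radii to hit precisely the stated exponents is the delicate step; the improvement over \cite{Hoch1} presumably comes from the disjointness observation above, which replaces a cruder covering argument in that paper.
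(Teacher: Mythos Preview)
Your overall plan---iterate Lemma \ref{boundgen}, halve the mass at each stage, and after $\kappa$ stages locate a point in $\kappa$ nested thickened spheres to contradict the intersection dimension---is the right one, and matches the paper. The gap is in the step where you pass from the collection $\mathcal V_i$ to a \emph{single} ball $B^{(i)}$ by pigeonhole. That step loses a factor of $|\mathcal V_i|$ in the mass, and $|\mathcal V_i|$ is simply not bounded: Lemma \ref{boundgen} gives you no control on how many balls are in $\mathcal V$. Your proposed fix does not work either. Well-separation of $\partial\mathcal V_i$ means the \emph{spheres} are far apart, not that the balls are disjoint (one ball in $\mathcal V_i$ may well sit inside another), so ``$\sum_B \nu(B)\le \nu(M)$'' is not available. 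You are right that the thickened spheres $\partial_{2r_i}B$ are disjoint, so $\sum_B \nu(\partial_{2r_i}B)\le \nu(M)$; but this only yields a ball with a large \emph{ratio} $\nu(F_{i-1}\cap\partial_{2r_i}B)/\nu(\partial_{2r_i}B)$, not a large absolute mass, and there is no lower bound on the individual $\nu(\partial_{2r_i}B)$. Consequently $\delta_i$ can collapse to zero long before you reach stage $\kappa$, and no bookkeeping will recover the stated bound on $q$.

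The paper avoids this entirely by \emph{not} selecting a single ball at each stage. It keeps the whole collection $\mathcal V_i$ and sets $F_i = F_{i-1}\cap\bigcup_{B\in\mathcal V_i}\partial_{t(i)}B$, so that $\nu(F_i)\ge\tfrac12\nu(F_{i-1})$ with a fixed factor $\tfrac12$ loss (this is why $p_i=\lceil 2^{i+1}\chi/(\epsilon\delta)\rceil$ and the bound on $q$ has the shape it does). The chain of balls required for the intersection-dimension contradiction is then found \emph{afterwards}: take any $x\in F_\kappa$ and for each $i$ pick the ball $B_{r(i)}(x_i)\in\mathcal V_i$ whose thickened sphere contains $x$. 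The subtle point---which replaces your automatic chain condition---is to show that $x_j\in\partial_{t(i)}B_{r(i)}(x_i)$ for $j>i$. This is where well-separation is actually used: $x_j\in F_i$ lies in \emph{some} thickened sphere of $\mathcal V_i$, and since the spheres in $\partial\mathcal V_i$ are farther apart than the diameter of the ball $B_{r(j)+t(j)}(x_j)$ containing $x$, that sphere must be the same one that contains $x$, namely $\partial_{t(i)}B_{r(i)}(x_i)$. In short, well-separation is not a tool for bounding $|\mathcal V_i|$; it is what lets you postpone the choice of balls until the end without losing the chain condition. To make this work the paper also arranges the blocks in \emph{decreasing} radius order (so $\mathcal V_{i+1}$ uses carpets with smaller indices than $\mathcal V_i$), which is what makes the containment-of-diameters argument and the inequality $r(i)>t(1)\cdots t(i)R$ go through.
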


\begin{proof}
Suppose for a contradiction that $\nu(F) > \delta \nu(M)$. Let $F_0 = F$, $p_i = \left\lceil \frac{2^{i+1} \chi}{\epsilon \delta} \right\rceil$, $q_\kappa = 0$ and set $q_i = p_i(1+q_{i+1})$ for each $0 \leq i \leq \kappa-1$. In particular $q = q_0$.

The idea behind this proof is to first apply Lemma \ref{boundgen} to find a collection of (thickened) spheres containing at least half the mass of $F$. We will then do this again with the portion of $F$ inside the first collection of spheres to produce a second collection (with centres inside spheres from the previous one) that contains at least a quarter of the mass in $F$. We will continue in this fashion until we have $\kappa$ such collections, with the last containing at least $2^{-\kappa}$ of a mass of $F$. By taking care to control the radii of the spheres at each stage we will ensure that any point in this portion of $F$ must lie in a sequence of $\kappa$ thickened spheres satisfying the definition of the intersection dimension, forcing a contradiction since the intersection of any such sequence must be empty. 

More precisely, we construct a sequence of sets $F = F_0 \supset F_1 \supset ... \supset F_\kappa$ and select positive integers $n_1,...,n_\kappa$ such that $1 \leq n_i \leq p_{i-1} - 1$ where for each $1 \leq i \leq \kappa$ we have $\nu(F_i) \geq \frac{1}{2} \nu(F_{i-1})$ and 
$$F_i = F_{i-1} \cap \bigcup_{B \in \mathcal{V}_i} \partial_{t(i)} B$$
with $\mathcal{V}_i$ being a subcollection of balls centred in $F_{i-1}$ from the stack
$\mathcal{U}_{N_i+ q_i + 1},...,\mathcal{U}_{N_{i-1}+q_{i-1}}$ and for which $\partial \mathcal{V}_i$ is well-separated. Here $N_i = \sum_{j=1}^i n_j(1+q_j)$ and $t(i) = 2 \, \rmax{\mathcal{U}_{N_i}}$. Note that our assumptions ensure that $N_{i-1} + 1 \leq N_i \leq  N_i+q_i + 1 \leq N_{i-1}+q_{i-1}$. In particular if $i < j$ then $N_j + q_j \leq N_i + q_i$.

\begin{flushleft}
	\textbf{How the sequences force a contradiction:}
\end{flushleft}

From these conditions we are able to deduce that 
$$\nu(F_\kappa) \geq \frac{1}{2^\kappa} \nu(F) \geq \frac{\delta}{2^\kappa} \nu(M) >0$$
and so in particular $F_\kappa$ is non-empty. 

Let
$$x \in F_\kappa = F \cap \bigcap_{i=1}^\kappa \bigcup_{B \in \mathcal{V}_i} \partial_{t(i)}B.$$
By definition of $F_\kappa$ there exist $x_1,...,x_\kappa$ and $r(1),...,r(\kappa)$ such that each $x_i \in F_{i-1}$ and $x \in \bigcap_{i=1}^\kappa \partial_{t(i)} B_{r(i)}(x_i)$. Suppose $i < j$ then $x_j \in F_{j-1} \subseteq F_i \subseteq \bigcup_{B \in \mathcal{V}_i} \partial_{t(i)} B$. Since the collection $\partial\mathcal{V}_i$ is well-separated its elements are a distance at least 
\begin{align*}
	\rmin{\mathcal{V}_i} 
	\geq \rmin{\mathcal{U}_{N_i + q_i +1}} 
&> 	(\rmax{\mathcal{U}_{N_i + q_i}})^2 \\
&>	 7 \, \rmax{\mathcal{U}_{N_i + q_i}} \\
&>	 2t(i) + t(j) + \rmax{\mathcal{U}_{N_{j-1} + q_{j-1}}}
\end{align*}
apart, where we have applied properties (a) and (b). Since $x_j$ lies within distance $t(i)$ of some element of $\partial\mathcal{V}_i$ (which is well separated) this inequality means the ball of radius $t(j) + \rmax{\mathcal{U}_{N_{j-1} + q_{j-1}}}$ about $x_j$ can intersect at most one of the thickened spheres
$\lbrace \partial_{t(i)} B : B \in \mathcal{V}_i \rbrace$. Since $\rmax{\mathcal{V}_j} \leq \rmax{\mathcal{U}_{N_{j-1} + q_{j-1}}}$ and $\partial_{t(i)} B_{r(i)}(x_i) \cap \partial_{t(j)} B_{r(j)}(x_j)$ is non-empty we see that this sphere is $\partial_{t(i)} B_{r(i)}(x_i)$ and hence $x_j \in \partial_{t(i)} B_{r(i)}(x_i)$. Next note that given $1 \leq i \leq \kappa$
\begin{align*}
r(i) \geq \rmin{\mathcal{U}_{N_i + q_i +1}} > \rmin{\mathcal{U}_{N_i +1}} > 2(\rmax{\mathcal{U}_{N_i}})^2 \geq t(i) \, \rmax{\mathcal{U}_{N_{i-1}+1}} 
\end{align*}
and by recursion
\begin{align*}
r(i) &> t(i)t(i-1)...t(2) \, \rmax{\mathcal{U}_{N_{1}+1}} \\
&> t(i)t(i-1)...t(2)t(1) \, \rmax{\mathcal{U}_{N_{1}}} \\
&> t(i)t(i-1)...t(2)t(1)R.
\end{align*}
This means that the $x_i$, $r(i)$ and $t(i)$ satisfy the conditions in the definition of the intersection dimension and so $\bigcap_{i=1}^\kappa \partial_{t(i)} B_{r(i)}(x_i) = \emptyset$, a contradiction.

\begin{flushleft}
	\textbf{Constructing the sequences:} 
\end{flushleft}

All that remains is to show such collections $\mathcal{V}_i$ and integers $n_i$ exist, and for this we will use Lemma \ref{boundgen}. Given these up to a certain $0 \leq i \leq \kappa-1$ we produce the $i+1$ set as follows: consider the stack $\mathcal{U}_{N_i + 1}, ..., \mathcal{U}_{N_i + q_i}$
over $F$. This can clearly be restricted to a stack $\mathcal{U}_{N_i + 1}', ..., \mathcal{U}_{N_i + q_i}'$ over $F_i$ by simply taking the balls with centres in $F_i$, and it inherits all the radii growth conditions from the original stack. In particular, we may apply Lemma \ref{boundgen} to the stack $\lbrace \mathcal{U}_{N_i + j(1+q_{i+1})}' \rbrace_{j=1}^{p_i}$ to find $1 \leq n \leq p_i -1$ and find a subcollection $\mathcal{V} \subseteq \bigcup_{n + 1 \leq j \leq p_i} \mathcal{U}_{N_i + j(1+q_{i+1})}$ for which (if we take $n_{i+1} = n$)
$$\nu{\left( F_i \cap \bigcup_{B \in \mathcal{V}} \partial_{t(i+1)}B \right)} > \frac{1}{2} \nu(F_i)$$
since $N_{i+1} = N_i + n_{i+1}(1+q_{i+1})$. By noting the range of $j$, we see that $\mathcal{V}$ consists of balls from the stack $\mathcal{U}_{N_{i+1}+ q_{i+1} + 1},...,\mathcal{U}_{N_{i}+q_{i}}$, and so we may take $\mathcal{V}_{i+1} = \mathcal{V}$.

\begin{flushleft}
	\textbf{Proving the bound on $q$:}
\end{flushleft}

To get the bound on $q$ observe that
\begin{align*}
q = q_0 
&= 		p_0(1+p_2(1+...(1 + p_{\kappa-1})...) 
= 		\sum_{i=0}^{\kappa-1} \prod_{j=0}^i p_j \\
&\leq	
\sum_{i=0}^{\kappa-1} \left( \frac{2\chi}{\epsilon\delta} \right)^i 	 2^{\sum_{j=0}^i j}
\leq \kappa \left( \frac{2 \sqrt{2} \chi}{\epsilon\delta} \right)^{\kappa-1} 	 \sqrt{2}^{(\kappa-1)^2}
\end{align*}
which completes the proof.
\end{proof}

With this result in hand we are now ready to show that 
\begin{align*}
\frac{\sum_{g \in B_k \triangle \sigma B_k} \omega_g}{\sum_{g \in B_k} \omega_g} \rightarrow 0 \qquad \text{a.s.}
\end{align*}
under a further assumption on the metric structure.

We say that a metric space $(M,d)$ is \emph{voidless} if for all $x \in M$ and $r > 0$, every closed ball $B \subset M$ such that $B \cap \lbrace y : d(x,y) < r \rbrace \neq \emptyset$ and $B \cap \lbrace y : d(x,y) > r \rbrace \neq \emptyset$ satisfies $B \cap \lbrace y : d(x,y) = r \rbrace \neq \emptyset$. In particular, note that if a metric space is such that every closed ball is path connected then the intermediate value theorem ensures it is voidless. 

\begin{lem} \label{bdybnd}
Let $(G,d)$ be a group metric space and suppose that there is a voidless right invariant group metric space $(\tilde{G},\tilde{d})$ for which $G \leq \tilde{G}$ and $(\tilde{G},\tilde{d})$ extends $(G,d)$. Then for any closed ball $B \subset G$ and $\sigma \in G$ we have $B \triangle \sigma B \subseteq \partial_t B$ where $t = d(\sigma,0)$.
\end{lem}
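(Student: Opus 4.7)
The plan is to extract a metric identity from right-invariance and then invoke the voidless property on a cleverly chosen ball. First I would record that for every $y \in \tilde{G}$ one has $\tilde{d}(y, \sigma^{-1} y) = t$: right-multiplying both arguments by $y^{-1}$ gives $\tilde{d}(e, \sigma^{-1})$, and a further right-multiplication by $\sigma$, combined with the symmetry of a metric, identifies this with $\tilde{d}(\sigma, e) = t$. Thus $\sigma^{-1} y \in \tilde{B}_t(y)$ for every $y$. I would couple this with the elementary characterisation $y \in \sigma B_r(x) \iff \tilde{d}(x, \sigma^{-1}y) \leq r$.

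Now fix $y \in B \triangle \sigma B$ where $B = B_r(x)$; by symmetry I may assume $y \in B \setminus \sigma B$, so $\tilde{d}(x,y) \leq r$ and $\tilde{d}(x,\sigma^{-1}y) > r$. The closed ball $\tilde{B}_t(y) \subseteq \tilde{G}$ then contains $y$ at $\tilde{d}$-distance $\leq r$ from $x$ and $\sigma^{-1}y$ at distance $> r$ from $x$. Provided the first inequality is strict, the voidless property applied with centre $x$, radius $r$ and the ball $\tilde{B}_t(y)$ produces some $z \in \tilde{B}_t(y)$ with $\tilde{d}(x,z) = r$, that is, $z \in \partial \tilde{B}_r(x)$ and $\tilde{d}(y,z) \leq t$, yielding $y \in \partial_t B$. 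The edge case $\tilde{d}(x,y) = r$ is immediate, since then $y \in \partial \tilde{B}_r(x)$ itself. The symmetric case $y \in \sigma B \setminus B$ is handled identically after swapping the roles of $y$ and $\sigma^{-1}y$ in the voidless step.

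The main obstacle is simply bookkeeping: confirming that left-translation by $\sigma^{-1}$ really does preserve $\tilde{d}$-distance $t$ from every point (so that voidless can be fed a ball of exactly the right radius), and treating the equality case separately, since the voidless hypothesis is stated only for strict inequalities on either side of the sphere. Once these points are in place the lemma reduces to a single application of voidless.
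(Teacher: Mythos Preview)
Your proof is correct and follows essentially the same approach as the paper: both use right-invariance to place $\sigma^{-1}y$ in $\tilde{B}_t(y)$ and then invoke the voidless property for this ball against the sphere $\partial\tilde{B}_r(x)$. The only cosmetic difference is that the paper argues by contrapositive (if $g\notin\partial_t B$ then $\tilde{B}_t(g)$ misses the sphere, forcing $g$ and $\sigma^{-1}g$ on the same side), whereas you argue directly and handle the boundary case $\tilde{d}(x,y)=r$ explicitly.
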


\begin{proof}
Let $g \in G$, then $d(\sigma^{-1} g, g) = d(\sigma^{-1},0) = d(\sigma,0)$ i.e. $\sigma^{-1} g \in B_t(g)$. Suppose $g \not\in \partial_t B$, then $\tilde{d}(g,\partial \tilde{B}) > t$ (here $\tilde{B}$ denotes the ball in $\tilde{G}$ with the same centre and radius as $B$). Hence $\tilde{B}_t(g)$ does not intersect $\partial \tilde{B}$. Since $\tilde{G}$ is voidless it follows that either $B_t(g) \subseteq B$ or $B_t(g) \subseteq B^c$. Therefore, if $g \in B$ then $\sigma^{-1} g \in B$ and if $g \in B^c$ then $\sigma^{-1} g \in B^c$ and so either $g \in B \cap \sigma B$ or $g \not \in B \cup \sigma B$, i.e. $g \not\in \partial_t B$ implies $g \not\in B \triangle \sigma B$.
\end{proof}

So, under the conditions of the lemma given $\sigma$ it is enough for us to prove that 
\begin{align*}
\frac{\sum_{g \in \partial_t B_k} \omega_g}{\sum_{g \in B_k} \omega_g} \rightarrow 0 \qquad \text{a.s.}
\end{align*}
with $t = d(\sigma,0)$.

\begin{defn}
Given a sequence $e \in B_1 \subset B_2 \subset ...$ of finite subsets of a countable group $G$ we say that it has the \emph{multiplicative doubling property}  (MDP) if there exists constants $D > 0$ and $K \in \mathbb{N}$ such that $|B_kB_k| \leq D |B_k|$ for all $k \geq K$.
\end{defn}

\begin{theorem} \label{bdycon}
Let $(G,d)$ be a countable group metric space which acts non-singularly on the probability space $(X,\mu)$, is well separable and such that the sequence of integer balls $(B_k)$ has the MDP. Suppose that there is a right invariant group metric space $(\tilde{G},\tilde{d})$ for which $G \leq \tilde{G}$ and $(\tilde{G},\tilde{d})$ extends $(G,d)$. Furthermore suppose $(G,d)$ has finite intersection dimension with respect to $(\tilde{G},\tilde{d})$. Then for all $t > 0$
\begin{align*}
\lim_{k \rightarrow \infty} \frac{\sum_{g \in \partial_t B_k} \omega_g}{\sum_{g \in B_k} \omega_g} = 0 \qquad \text{a.s..}
\end{align*}
\end{theorem}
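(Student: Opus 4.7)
\emph{The plan is to argue by contradiction.} Suppose $\mu(A) > \delta$ for $A := \{y : \limsup_k \sum_{g \in \partial_t B_k} \omega_g(y) / \sum_{g \in B_k} \omega_g(y) > \epsilon\}$; define the measure $\nu_y$ on $G$ by $\nu_y(\{g\}) := \omega_g(y)$ and let $D, K$ denote the MDP constants. The Radon-Nikodým cocycle $\omega_{gh}(x) = \omega_g(hx)\omega_h(x)$ together with right-invariance of $\tilde{d}$ gives $\nu_{hx}(\partial_t B_k(e))/\nu_{hx}(B_k(e)) = \nu_x(\partial_t B_k(h))/\nu_x(B_k(h))$, so $hx \in A$ is equivalent to there being infinitely many $k$ with $\nu_x(\partial_t B_k(h)) > \epsilon\, \nu_x(B_k(h))$. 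The aim is to package this into Theorem \ref{maintech} applied to the single measure $\nu_x$ for a well-chosen $x$.

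Next I would build uniform radii cut-offs. Set $\delta' := \delta/(2D)$, let $q = q(\epsilon,\delta',\chi,\kappa)$ be the integer supplied by Theorem \ref{maintech}, and put $\eta := \delta/(4q)$. Recursively define $\tilde{L}_1 := 7\max(t,R)$, $\rho_i(y) := \min\{r \geq \tilde{L}_i : \nu_y(\partial_t B_r(e)) > \epsilon\, \nu_y(B_r(e))\}$ (almost surely finite on $A$), pick $\tilde{M}_i < \infty$ with $\mu(A \cap \{\rho_i > \tilde{M}_i\}) < \eta$, and set $\tilde{L}_{i+1} := 2\tilde{M}_i^2 + 1$. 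Then $C := A \cap \bigcap_{i=1}^q \{\rho_i \leq \tilde{M}_i\}$ satisfies $\mu(C) > \delta/2$.

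I would then combine a Fubini-style average with the MDP. Pick $N \geq \max(K, \tilde{M}_q + t)$ so that $|B_{2N}| \leq D|B_N|$, and set $F(x) := B_N \cap \{h : hx \in C\}$. Since $\int \omega_h(x)\mathbf{1}_C(hx)\,d\mu = \mu(C)$, one computes $\int \nu_x(F(x))\,d\mu = |B_N|\mu(C) > (\delta/2)|B_N| \geq \delta'|B_{2N}| = \delta' \int \nu_x(B_{2N})\,d\mu$, and so $Y := \{x : \nu_x(F(x)) > \delta' \nu_x(B_{2N})\}$ has positive $\mu$-measure. For any $x \in Y$ the stack $\mathcal{U}_i := \{B_{\rho_i(hx)}(h) : h \in F(x)\}$ has all radii in $[\tilde{L}_i, \tilde{M}_i]$, so the growth conditions and condition (4) of Theorem \ref{maintech} hold by construction together with the first paragraph, while all balls and their $t$-boundaries lie in $B_{2N}$. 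Theorem \ref{maintech} then forces $\nu_x(F(x)) \leq \delta' \nu_x(B_{2N})$, contradicting $x \in Y$.

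The main obstacle is producing the uniform radii bounds of the second paragraph: the natural $\rho_i(y)$ is in principle unbounded in $y$, but both Theorem \ref{maintech} and the MDP step demand a priori upper bounds to relate $|B_{2N}|$ to $|B_N|$ and to keep the stack inside a single finite set. The Egorov-style cut-offs $\tilde{M}_i$ supply these, and the circular dependence between $q$, $\eta$, and the $\tilde{M}_i$ is resolved by computing $q$ from $\delta'$ first and only then calibrating $\eta$ and the cut-offs.
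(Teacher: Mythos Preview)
Your approach is essentially the paper's: both trim the bad set so that the good radii lie in a prescribed sequence of windows satisfying the growth hypotheses of Theorem~\ref{maintech}, use the cocycle $\omega_{gh}(x)=\omega_g(hx)\omega_h(x)$ to convert the boundary--ratio condition at $hx$ into the condition $\nu_x(\partial_t B_\rho(h))>\epsilon\,\nu_x(B_\rho(h))$, and then combine Theorem~\ref{maintech} with the MDP to reach a contradiction (the paper integrates to get $\mu(A)\le D\delta$ for every $\delta$, you instead locate a single $x\in Y$, but this is cosmetic). One correction: the MDP only yields $|B_NB_N|\le D|B_N|$, not $|B_{2N}|\le D|B_N|$---in general $B_NB_N\subseteq B_{2N}$ by right invariance and the triangle inequality, which is the wrong containment---so replace $B_{2N}$ by $B_N^2=B_NB_N$ throughout (exactly as the paper does); the stack's balls and $t$-boundaries already sit inside $B_N^2$ once $\tilde M_q+t\le N$, so nothing else changes.
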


\begin{proof}
Here we return to the standard approach laid out in \cite{Hoch1}. Let $\chi$ be the constant of well separability, the intersection dimension be $\kappa$ at scales $R$ and $D$ be the multiplicative doubling constant.

Suppose for a contradiction that for some $\epsilon > 0$ 
$$\limsup_{k \rightarrow \infty} F_k(x) > \epsilon$$
on some set $A_0$ with positive measure, where $$F_k = \frac{\sum_{g \in \partial_t B_k} \omega_g}{\sum_{g \in B_k} \omega_g}.$$

Now we construct a sequence of integers $r_1^- < r_1^+ < r_2^- < r_2^+ < ...$ and sets $A_0 \supset A_1 \supset A_2 \supset ...$ as follows: we first let $r_0^+ = 0$ and ensure $r_1^- > 7 \max(t,R)$. Then for each $i \geq 1$ given $r_{i-1}^+$ and $A_{i-1}$ with $\mu(A_{i-1}) > \frac{1}{2}(1+\frac{1}{i})\mu(A_0)$ we take $r_{i}^- > 2 (r_{i-1}^+)^2$ and let
$$A_{i} = \left\lbrace x \in A_{i-1}: \max_{r_{i}^- \leq j \leq r_{i}^+} F_j(x) > \epsilon \right\rbrace$$
where $r_{i}^+$ is chosen large enough to ensure that $\mu(A_{i}) > \frac{1}{2}(1+\frac{1}{i+1})\mu(A_0)$. In particular these properties ensure that the set $A = \bigcap_{i=0}^\infty A_i$ has measure at least $\frac{1}{2}\mu(A_0) > 0$, and that the $r_i^\pm$ satisfy the radii growth conditions for Theorem \ref{maintech}. We will use this latter property so show that we must have $\mu(A) = 0$, giving the contradiction.

Fix $\delta > 0$ and take $q = q(\chi,\kappa,\epsilon,\delta)$ as in Theorem \ref{maintech}. Fix $k > r_q^+ + t$ large enough to employ the MDP. Observe that 
$$\mu(A) = \frac{1}{|B_k|} \int_X \sum_{g \in B_k} \hat{g} \ind_A \, d\mu.$$
If we fix $x \in X$ then we may define a measure $\nu = \nu_{x,k}$ on $B_k^2 = B_kB_k$ by 
$\nu(E) = \sum_{g \in E} \omega_g(x)$. Note that since $B_k^2$ is contained by $G$ we are able to take it as $M$ in Theorem \ref{maintech} (it inherits well-separability and the finite intersection dimension properties from $G$). In addition for almost every such $x$ the measure $\nu$ is finite, since $B_k^2$ is, so it will suffice for us to consider only these $x$. Let $F = \lbrace g \in B_k : gx \in A \rbrace$, so that $$\nu(F) = \sum_{g \in B_k} \ind_A(gx)\omega_g(x) = \sum_{g \in B_k} \hat{g} \ind_A(x).$$

We can construct a stack over $F$ as follows. If $h \in F$ then $hx \in A$ and so for each $1 \leq i \leq q$ there is $r_i^- \leq m = m(i,h) \leq r_i^+$ for which
$$\sum_{g \in \partial_t B_{m}} \omega_g(hx) > \epsilon \sum_{g \in B_m} \omega_g(hx).$$
Note that as $B_r(g) = B_r g$, $h \in B_k$, $m \leq r_q^+$, and $k > r_q^+ + t$ we have $B_m(h) \subseteq B_k^2$ and $\partial_t B_m(h) \subseteq B_{m+t}(h) \subseteq B_k^2$. Hence $\nu(\partial_t B_m(h)) > \epsilon \, \nu(B_m(h))$. It follows that given $1 \leq i \leq q$ we can let $\mathcal{U}_i = \lbrace B_{m(i,h)}(h) : h \in F \rbrace$, and this stack satisfies all the requirements of 
Theorem \ref{maintech}.

Applying the theorem it follows that $\nu(F) \leq \delta \nu(B_k^2)$ for a.e.\ $x \in X$ and we may apply the multiplicative doubling condition to see that
$$\mu(A) \leq \frac{1}{|B_k|} \int_X \delta \sum_{g \in B_k^2} \omega_g \, d\mu = \frac{|B_k^2|}{|B_k|} \delta \leq D \delta.$$
Since $\delta > 0$ was arbitrary, we are done. 
\end{proof}

\subsection{The maximal inequality}

In this section we follow the exposition given in \cite{Fel1} to prove the maximal inequality. For the interested reader, \cite{Fel1} also gives a concise account of the various authors who contributed to the approach.

A geometrical assumption thought to be essential to the maximal inequality, see \cite{Hoch1}, is the Besicovitch covering property.

\begin{defn} \label{BCP}
A metric space $(M,d)$ has the \textit{Besicovitch covering property} (BCP) if there is a constant $C > 0$ such that for any finite set $E \subset X$ and any carpet $\, \mathcal{U}$ over $E$ we have a subcollection $\mathcal{V} \subseteq \mathcal{U}$ for which
$$\ind_E \leq \sum_{U \in \mathcal{V}} \ind_U \leq C.$$ 
In this situation we say $(M,d)$ has the BCP with constant $C$. A carpet satisfying the second inequality above is said to have multiplicity $C$.
\end{defn}

This property plays a crucial role in the following lemma.

\begin{lem} \label{maxlem}
Let $G$ be a countable group and $(G,d)$ be a right invariant metric space which satisfies the BCP with constant $C$. For each function $a \in l^1(G)$ and $k \in \mathbb{N}$ let $s_k a (h) = \sum_{g \in B_k} a(gh)$ for all $h \in G$. Given $k \in \mathbb{N}$ and $a,b \in l^1(G)$ with $b \geq 0$ the set $H = H_k(a,b) = \bigcup_{i=1}^k H^{(i)}$, where $H^{(i)} = \lbrace h \in G : s_i a(h) > \epsilon s_i b(h) \rbrace$, satisfies
$$\|a\|_1 \geq \epsilon C^{-1} \sum_{h \in H} b(h).$$  
\end{lem}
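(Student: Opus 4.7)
The plan is to apply the Besicovitch covering property to a carpet whose balls witness, at each $h \in H$, the inequality defining membership in some $H^{(i)}$. First, using the right invariance of $d$, one checks that $s_i a(h) = \sum_{g \in B_i(h)} a(g)$ and similarly for $b$, since $B_i h = B_i(h)$. In particular, $h \in H^{(i)}$ is equivalent to $\sum_{g \in B_i(h)} a(g) > \epsilon \sum_{g \in B_i(h)} b(g)$. For each $h \in H$ I would fix some $i(h) \in \lbrace 1, \ldots, k \rbrace$ with $h \in H^{(i(h))}$ and set $U_h = B_{i(h)}(h)$, so that $\lbrace U_h : h \in H \rbrace$ is a carpet over $H$ on which the witnessing inequality holds ball-by-ball.

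Because the BCP as stated applies only to finite centre sets, the plan is to work with an arbitrary finite $F \subseteq H$ and then pass to the supremum over such $F$. Applying BCP to $\lbrace U_h : h \in F \rbrace$ yields a subcollection $\mathcal{V}$ with $\ind_F \leq \sum_{U \in \mathcal{V}} \ind_U \leq C$. Using $b \geq 0$, the witnessing inequality on each $U_h \in \mathcal{V}$, and the multiplicity bound, the chain
\begin{align*}
\sum_{h \in F} b(h)
&\leq \sum_{U_h \in \mathcal{V}} \sum_{g \in U_h} b(g)
< \frac{1}{\epsilon} \sum_{U_h \in \mathcal{V}} \sum_{g \in U_h} a(g) \\
&= \frac{1}{\epsilon} \sum_{g \in G} a(g) \sum_{U \in \mathcal{V}} \ind_U(g)
\leq \frac{C}{\epsilon} \|a\|_1
\end{align*}
gives $\epsilon C^{-1} \sum_{h \in F} b(h) \leq \|a\|_1$, and letting $F$ exhaust $H$ finishes the proof since $b \geq 0$.

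I do not expect a serious obstacle: this is the standard Vitali-type argument, with BCP playing precisely the role of controlling overlaps between the witnessing balls. The two points to keep an eye on are that $a$ need not be non-negative, so the final step uses the harmless upper bound $a \leq |a|$ in combination with the pointwise bound $\sum_U \ind_U \leq C$ (note that the witnessing partial sums of $a$ are automatically positive, since $s_i a(h) > \epsilon s_i b(h) \geq 0$, so the strict inequality is not jeopardised), and that $H$ can be infinite, which is handled by the finite-exhaustion device above.
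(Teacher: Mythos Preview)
Your proposal is correct and follows essentially the same argument as the paper: pick a witnessing ball for each $h$ in a finite subset of $H$, apply the BCP to extract a bounded-multiplicity subcover, and chain the inequalities using $b \geq 0$ and then $a \leq |a|$ together with the multiplicity bound to reach $C\|a\|_1$; finally exhaust $H$ by finite subsets. The paper's proof is identical in structure, differing only in notation (it intersects $H$ with an arbitrary finite $E \subset G$ rather than taking $F \subseteq H$ directly).
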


\begin{proof}
Let $E \subset G$ be finite and consider that for each $h \in E \cap H$ there is a $1 \leq m(h) \leq k$ for which $h \in H^{(m(h))}$. The collection of balls $B_{m(h)}(h) = B_{m(h)}h$ with $h \in  E \cap H$ describes a carpet over $E \cap H$ and hence by the BCP we can find a set $F \subset  E \cap H$ for which
$$\ind_{E \cap H} \leq \sum_{h \in F} \ind_{B_{m(h)}(h)} \leq C.$$
It follows that
\begin{align*}
\sum_{h \in E \cap H} b(h) \leq \sum_{h \in F} \sum_{g \in B_{m(h)}(h)} &b(g) = \sum_{h \in F} s_{m(h)}b(h) \\
&< \epsilon^{-1} \sum_{h \in F} s_{m(h)}a(h) = \epsilon^{-1} \sum_{h \in F} \sum_{g \in B_{m(h)}(h)} a(g)  \leq  \epsilon^{-1}C \|a\|_1
\end{align*}
and as $E$ was arbitrary the result follows.
\end{proof}

Lemma \ref{maxlem} combines with the multiplicative doubling property to give the maximal inequality.

\begin{theorem}[The maximal inequality]
Let $G$ be a countable group and $(G,d)$ be a right invariant metric space which has the BCP with constant $C$ and suppose the sequence of integer balls $(B_k)$ has the MDP with constant $D$. Then for any $f \in L^1$ and $\epsilon > 0$
$$\mu\left( \sup_{k \geq 1} \left| \frac{\sum_{g \in B_k} \hat{g}f }{\sum_{g \in B_k} \hat{g}1} \right| > \epsilon \right) \leq \frac{CD}{\epsilon} \| f \|_1.$$
\end{theorem}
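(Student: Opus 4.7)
The plan is to follow the classical transfer principle: derive a combinatorial maximal inequality on the group via Lemma \ref{maxlem} for a single point $y$, and then integrate over $y$, using Fubini and the change-of-variables formula $\int \hat{g}h \, d\mu = \int h \, d\mu$ to convert counts of group elements into values of $\mu$. Without loss of generality I would take $f \geq 0$, since $\lvert \sum_{g \in B_k} \hat{g} f \rvert \leq \sum_{g \in B_k} \hat{g}\lvert f \rvert$ and $\|\lvert f \rvert\|_1 = \|f\|_1$. Let $R_i(x) = \sum_{g \in B_i}\hat{g}f(x)/\sum_{g \in B_i}\hat{g}1(x)$ and set $A_k = \{x : \max_{1 \leq i \leq k} R_i(x) > \epsilon\}$; by monotone continuity it suffices to establish $\mu(A_k) \leq CD\|f\|_1/\epsilon$ for all $k \geq K$.

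The key computation rests on the cocycle identity $\omega_{gh}(y) = \omega_g(hy)\omega_h(y)$, which gives $\hat{g}(\hat{h}f)(y) = \hat{gh}f(y)$. For fixed $k \geq K$ and $y \in X$, I would put $a_y(g) = f(gy)\omega_g(y)\ind_{B_k B_k}(g)$ and $b_y(g) = \omega_g(y)\ind_{B_k B_k}(g)$; these live in $l^1(G)$ because $B_k B_k$ is finite. Right-invariance ensures that for $h \in B_k$ and $i \leq k$ the product set $B_i h$ is contained in $B_k \cdot B_k$, so that
\begin{align*}
s_i a_y(h) &= \sum_{g \in B_i} f(ghy)\omega_{gh}(y) = \omega_h(y) \sum_{g \in B_i} \hat{g}f(hy),\\
s_i b_y(h) &= \omega_h(y) \sum_{g \in B_i} \hat{g}1(hy).
\end{align*}
Hence $s_i a_y(h)/s_i b_y(h) = R_i(hy)$, and therefore $\{h \in B_k : hy \in A_k\} \subseteq H_k(a_y, b_y)$.

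Applying Lemma \ref{maxlem} and restricting the sum on the right to $h \in B_k$ (where $b_y(h) = \omega_h(y)$) yields
\begin{align*}
\|a_y\|_1 \geq \epsilon C^{-1} \sum_{h \in B_k} \ind_{A_k}(hy)\omega_h(y).
\end{align*}
Now integrate both sides against $\mu$ and apply Fubini. For the left-hand side, $\int f(gy)\omega_g(y)\, d\mu(y) = \int f\, d\mu = \|f\|_1$ for each fixed $g$, so the integral equals $|B_k B_k|\cdot\|f\|_1$, which is at most $D|B_k|\cdot\|f\|_1$ by the MDP. For the right-hand side, the same change of variables gives $\int \ind_{A_k}(hy)\omega_h(y)\, d\mu(y) = \mu(A_k)$ for each $h$, so the integral equals $|B_k|\mu(A_k)$. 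Cancelling $|B_k|$ yields $\mu(A_k) \leq CD\|f\|_1/\epsilon$; letting $k \to \infty$ and using that $A_k$ increases to $\{\sup_k |R_k| > \epsilon\}$ completes the proof.

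The only step that requires real care is the identification of the sums $s_i a_y(h)$ and $s_i b_y(h)$: one must check that the cutoff $\ind_{B_k B_k}$ is invisible to $s_i$ at points $h \in B_k$ (which is precisely where right-invariance $B_i h \subseteq B_k B_k$ gets used) and then correctly apply the cocycle relation so the factor $\omega_h(y)$ cancels between numerator and denominator. Everything else is bookkeeping, with the MDP entering only at the last line to bound $|B_k B_k|/|B_k|$ by $D$.
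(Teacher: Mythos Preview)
Your proposal is correct and follows essentially the same approach as the paper: fix a point, define truncated $l^1$ functions $a$ and $b$ on $B_k B_k$ via the orbit and the cocycle, apply Lemma~\ref{maxlem}, then integrate over the base point and use the MDP to bound $|B_k B_k|/|B_k|$. The only cosmetic difference is that the paper starts from the identity $\mu(Y) = |B_K|^{-1}\int \sum_{g \in B_K}\hat g\ind_Y\,d\mu$ and bounds the inner sum pointwise, whereas you integrate the inequality from Lemma~\ref{maxlem} directly; the two are the same computation.
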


\begin{proof}
For convenience let
$$F_k = F_k(x) = \frac{\sum_{g \in B_k} \hat{g}|f| }{\sum_{g \in B_k} \hat{g}1}.$$
Fix $K \in \mathbb{N}$ large enough to employ the MDP, it is enough to show that 
$$\mu\left( \max_{1 \leq k \leq K}  F_k  > \epsilon \right) \leq \frac{CD}{\epsilon} \| f \|_1.$$ 
We consider $f \geq 0$ without loss of generality. Now we fix a typical $x \in X$ and seek to apply Lemma \ref{maxlem} with 
$a_x(h) = \ind_{B_K^2}(h) [\hat{h}f(x)] = \ind_{B_K^2}(h) f(hx)\omega_h(x)$ and $b_x(h) = \ind_{B_K^2}(h) \omega_h(x)$. Observe that if $h \in B_k$ then since $\omega_{gh}(x) = \omega_g(hx)\omega_h(x) \text{ a.e.}$
$$s_n a_x(h) = \sum_{g \in B_k} \ind_{B_K^2}(gh) f(ghx)\omega_{gh}(x) = \sum_{g \in B_k} f(ghx)\omega_{gh}(x) = \omega_h(x) \sum_{g \in B_k} \hat{g}f(hx)$$
and similarly
$$s_k b_x(h) = \omega_h(x) \sum_{g \in B_k} \hat{g}1(hx).$$
In particular, for almost every $x \in X$ we have $s_k a_x(h) > \epsilon s_k b_x(h)$ if and only if $F_k(x) > \epsilon$. Let $Y = \lbrace x \in X : \max_{1 \leq k \leq K}  F_k(x) > \epsilon \rbrace$ and $H_x = H_K(a_x,b_x)$ from Lemma \ref{maxlem}. Then $gx \in Y$ if and only if $g \in H_x$, and hence
\begin{align*}
\mu(Y) = \frac{1}{|B_K|} \int \sum_{g \in B_K} \hat{g}\ind_Y \, d\mu 
	   &= \frac{1}{|B_K|} \int \sum_{g \in H_x} \ind_{B_K}(g) \omega_g \, d\mu \\
	   &\leq \frac{C}{\epsilon|B_K|} \int \|a_x\|_1 \, d\mu 
	   = \frac{C}{\epsilon|B_K|} \int \sum_{g \in B_K^2} \hat{g}f \, d\mu 
	   = \frac{C |B_K^2|}{\epsilon|B_K|} \|f\|_1 
\end{align*}
since $\ind_{B_K}(g) \omega_g \leq b_x(g)$. The result then follows from the multiplicative doubling condition.
\end{proof}

\subsection{Completion of the proof}

We are now able to prove the ergodic theorem.

\begin{theorem}[The ergodic theorem] \label{genergthm}
Let $G$ be a countable group, equipped with a metric $d$, which has an ergodic non-singular action on the standard probability space $(X,\mu)$. Suppose that:
\begin{enumerate}[\normalfont (i)]
\item $(G,d)$ is well separable,
\item the sequence of integer balls $(B_k)$ has the multiplicative doubling property,
\item there is a voidless right invariant group metric space $(\tilde{G},\tilde{d})$ for which $G \leq \tilde{G}$ and $(\tilde{G},\tilde{d})$ extends $(G,d)$,
\item $(G,d)$ has finite intersection dimension with respect to $(\tilde{G},\tilde{d})$ and
\item $(G,d)$ has the Besicovitch covering property
\end{enumerate}
then for every $f \in L^1(\mu)$
\begin{align*}
\lim_{n \rightarrow \infty} \frac{\sum_{g \in B_k} \hat{g}f}{\sum_{g \in B_k} \hat{g}1} = \int f \, d\mu \qquad \text{almost everywhere.}
\end{align*}
\end{theorem}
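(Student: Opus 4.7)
The plan is the classical two-step approach: prove the theorem on a dense subset of $L^1$ using the structure results of Section 2.1, then transfer it to all of $L^1$ via the maximal inequality. The dense subset is $S = \{c + h - \hat{\sigma}h : c \in \mathbb{R}, \sigma \in G, h \in L^\infty\}$, whose density in $L^1$ was already established via Hahn-Banach and ergodicity at the start of Section 2. For $s = c + h - \hat{\sigma}h \in S$ one has $\int s \, d\mu = c$, and the algebraic identity recorded just after the definition of $S$ shows that the averages of $\hat{g}s$ differ from $c$ by a quantity controlled by $\|h\|_\infty \cdot \frac{\sum_{g \in B_k \triangle \sigma B_k} \omega_g}{\sum_{g \in B_k} \omega_g}$. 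Hence on $S$ the theorem reduces to the non-singular F{\o}lner condition for each fixed $\sigma \in G$.

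To establish this condition, fix $\sigma$ and set $t = \tilde{d}(\sigma, 0)$. Hypothesis (iii) supplies a voidless right-invariant extension of $(G,d)$, so Lemma \ref{bdybnd} applies and yields $B_k \triangle \sigma B_k \subseteq \partial_t B_k$. Thus it suffices to prove
$$\frac{\sum_{g \in \partial_t B_k} \omega_g}{\sum_{g \in B_k} \omega_g} \to 0 \quad \text{a.s.,}$$
which is precisely the content of Theorem \ref{bdycon}; its four hypotheses are exactly conditions (i)-(iv). This gives the desired pointwise convergence on all of $S$.

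For the extension from $S$ to $L^1$, I write $T_k f = \left(\sum_{g \in B_k} \hat{g} f\right) / \left(\sum_{g \in B_k} \hat{g} 1\right)$ and proceed by approximation. Given $f \in L^1$ and $s \in S$ one has the decomposition
$$T_k f - \int f \, d\mu = \left(T_k s - \int s \, d\mu\right) + T_k(f-s) - \int (f-s) \, d\mu.$$
The first bracket tends to $0$ a.e. by the previous step; the third term is bounded in absolute value by $\|f - s\|_1$; and the middle term is controlled on a large set by the maximal inequality of the preceding theorem, which uses hypotheses (ii) and (v) and provides $\mu\{ \sup_k |T_k(f-s)| > \eta\} \leq (CD/\eta)\|f-s\|_1$ for every $\eta > 0$. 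Fixing $\eta > 0$ and choosing $s_n \in S$ with $\|f - s_n\|_1 \to 0$ sufficiently fast (e.g.\ summably in $n$), a standard approximation argument forces $\limsup_k |T_k f - \int f \, d\mu| \leq 2\eta$ off a set of measure zero; letting $\eta \to 0$ concludes the proof.

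I do not expect any substantial obstacle here: all the deep work has already been done in Theorems \ref{maintech} and \ref{bdycon} (where the intersection dimension and well-separability hypotheses are used to force the thickened-boundary ratios to vanish) and in the maximal inequality (which is where the BCP and MDP enter). The final step is essentially bookkeeping, assembling these ingredients via the Banach principle.
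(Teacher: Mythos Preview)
Your proposal is correct and follows essentially the same route as the paper: establish convergence on the dense set $S$ by combining Lemma~\ref{bdybnd} (using (iii)) with Theorem~\ref{bdycon} (using (i), (ii), (iv)), then pass to all of $L^1$ via the maximal inequality (using (ii), (v)) and a standard approximation argument. The only cosmetic differences are that the paper fixes $f_m\in S$ with $\|f-f_m\|_1\le 1/m$ and bounds the $\limsup$ directly, rather than invoking a summable subsequence, and that Theorem~\ref{bdycon} does not actually need the voidless part of (iii) --- that is used only in Lemma~\ref{bdybnd}.
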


\begin{proof}
Let $C$ and $D$ be the Besicovitch and doubling constants. We have already seen that the set
$$S = \lbrace c + h - \hat{\sigma}h : c \in \mathbb{R}, \sigma \in G, h \in L^\infty \rbrace$$
is dense in $L^1$ and that given $\sigma \in G$
\begin{align*}
\frac{\sum_{g \in B_k} \hat{g}(c+h - \hat{\sigma}h)}{\sum_{g \in B_k} \hat{g}1} \rightarrow c \qquad \text{a.e.}
\end{align*} 
must occur if
$$\frac{\sum_{g \in B_k \triangle \sigma B_k} \omega_g}{\sum_{g \in B_k} \omega_g} \rightarrow 0 \qquad \text{a.e.}.$$
This latter condition follows from first using (iii) to apply Lemma \ref{bdybnd} and then using (i), (ii) and (iv) to apply Theorem \ref{bdycon} with $t = d(0,\sigma)$.

Now, given $f \in L^1$ we may choose a sequence $f_m = c_m + h_m - \hat{\sigma}_m h_m \in S$ such that $\| f - f_m \|_1 \leq \frac{1}{m}$ for all $m \geq 1$. In particular $c_m = \int f_m \, d\mu \rightarrow \int f \, d\mu$. 

Fix $\epsilon > 0$. By applying (ii) and (v) via the maximal inequality to $f - f_m$ we see that
$$\mu\left(\limsup_{k \to \infty} \left\lvert \frac{\sum_{g \in B_k} \hat{g}f}{\sum_{g \in B_k} \hat{g}1} - c_m \right\rvert > 2\epsilon \right) \leq \frac{2CD}{m\epsilon}$$
and hence, if we choose $m$ large enough for $|c_m - \int f \, d\mu| < \epsilon$ then
$$\mu\left(\limsup_{k \to \infty} \left\lvert \frac{\sum_{g \in B_k} \hat{g}f}{\sum_{g \in B_k} \hat{g}1} -  \int f \, d\mu\right\rvert > \epsilon \right) \leq \frac{2CD}{m\epsilon}$$
for all $m$ sufficiently large. Since $\epsilon > 0$ was arbitrary the result follows.
\end{proof}

\section{An ergodic theorem for $\mathrm{H}^n$-actions}

In this part we will show that the ergodic theorem holds for the discrete Heisenberg group by checking it satisfies the conditions of Theorem \ref{genergthm} when equipped with the metric used by Le Donne and Rigot in \cite{LeDRig1}. In the paper they showed the metric $d$, defined below, has the Besicovitch covering property. Therefore it is sufficient for us to address properties (i)-(iv) in the theorem.

\subsection{Setup} \label{Heissetup}

We shall define the \textit{$n$-dimensional continuous Heisenberg group}, $\mathbb{H}^n$, as follows. As a set take $\mathbb{H}^n = \mathbb{C}^n \times \mathbb{R}$ and equip it with the multiplication given by 
\begin{align*}
(z,\tau) \cdot (w,\sigma) = \left(z + w, \tau + \sigma + \frac{1}{2} \im{\langle z, w \rangle} \right)
\end{align*}
where $z,w \in \mathbb{C}^n$, $\tau, \sigma \in \mathbb{R}$ and the inner product is the standard one on $\mathbb{C}^n$, given by $\langle z, w \rangle = \sum_{j=1}^n \overline{z_j} w_j$. This is essentially the same realisation as that used by Le Donne and Rigot in \cite{LeDRig1} except we are using complex coordinates.

The \textit{$n$-dimensional discrete Heisenberg group} $\mathrm{H}^n$ is the discrete subgroup generated by the elements of the form $(e_j,0)$ or $(ie_j,0)$ where $e_j$ is a standard basis vector of $\mathbb{R}^n$. As a set
\begin{align*}
\mathrm{H}^n = \lbrace (z,\tau) \in \mathbb{H}^n : z \in \mathbb{Z}^n + i \mathbb{Z}^n , \tau \in \tfrac{1}{2}\langle \re{z}, \im{z} \rangle + \mathbb{Z} \rbrace.
\end{align*}
We will be taking $G = \mathrm{H}^n$ and $\tilde{G} = \mathbb{H}^n$ in Theorem \ref{genergthm}.

For each $\lambda > 0$ there is a dilation map $\delta_\lambda : \mathbb{H}^n \to \mathbb{H}^n$ given by
$$\delta_\lambda(z,t) = (\lambda z, \lambda^2 \tau).$$
Each $\delta_\lambda$ is an automorphism of $\mathbb{H}^n$.

To describe the range of the sums in the ergodic theorem we will be using the balls of the metric given by 
$$d\left( p, q \right) = \inf \left\lbrace r > 0 : \delta_{1/r}(pq^{-1}) \in B_{eucl} \right\rbrace$$
where $B_{eucl}$ denotes the closed euclidean unit ball on $\mathbb{C}^n \times \mathbb{R}$. This is the right invariant version of the metric given in \cite{LeDRig1} with $\alpha = 1$. It is \textit{one-homogeneous} with respect to the dilation, i.e. for all $\lambda > 0$ and $p,q \in \mathbb{H}^n$ we have
$d(\delta_\lambda p, \delta_\lambda q) = \lambda \, d(p,q).$ By considering the case $q = 0$ and using right invariance it is not difficult to show that for $p = (z,\tau)$ and $q = (w,\sigma)$
\begin{align}
d(p,q) \leq r \qquad \Longleftrightarrow \qquad \frac{\|z-w\|^2}{r^2} + \frac{\left( \tau - \sigma - \frac{1}{2} \im{\langle z, w \rangle} \right)^2}{r^4} \leq 1 \label{ball}
\end{align}
and 
\begin{align}
d(p,q) = r \qquad \Longleftrightarrow \qquad \frac{\|z-w\|^2}{r^2} + \frac{\left( \tau - \sigma - \frac{1}{2} \im{\langle z, w \rangle} \right)^2}{r^4} = 1 \label{sphere}
\end{align}
where $\|\cdot\|$ is the euclidean norm on $\mathbb{C}^n$. In particular, taking $r = 1$ and $q = 0$ shows that the unit sphere of $d$ is exactly the Euclidean unit sphere, and similarly for the unit ball. This property is key to many of arguments to follow. It also follows from (\ref{sphere}) that
\begin{align}
d(p,q) = \frac{1}{\sqrt{2}}\left(\|z-w\|^2 + \sqrt{\|z-w\|^4 + 4 \left( \tau - \sigma - \frac{1}{2} \im{\langle z, w \rangle} \right)^2} \right)^{\frac{1}{2}}. \label{metric}
\end{align}
This explicit expression can be used show that $d$ is in fact a metric. In addition, as stated in \cite{LeDRig1}, $d$ induces the euclidean topology. $d$ therefore defines a (right) \textit{homogeneous distance} on $\mathbb{H}^n$, i.e. it induces the euclidean topology, is right invariant and one-homogeneous for the dilation.

Observe that we can use the dilations and right invariance to describe any ball in $\mathbb{H}^n$, explicitly for each $r > 0$ and $p \in \mathbb{H}^n$ the closed ball $B_r(p) = \delta_r(B_{eucl}) \cdot p$. Since the dilation is a linear map and right multiplication by $p$ is an affine map it follows that each ball is convex, in the euclidean sense.

It will be useful for us to note the following. Let $R_\theta$ be the $n \times n$ complex diagonal matrix with $R_\theta(j,j) = e^{i\theta_j}$ where $\theta = ( \theta_j )_{j=1}^n \in \mathbb{R}^n$. Then the maps
\begin{align} \label{isometries}
(z, \tau) \mapsto (\overline{z},-\tau) & & \text{and} & & (z, \tau) \mapsto (R_\theta z,\tau)
\end{align}
are isometries of $d$.

We will use $d$ to both denote the metric on $\mathbb{H}^n$ and its restriction to $\mathrm{H}^n$. 

We are now ready to start checking the conditions of Theorem \ref{genergthm} are satisfied. Since $d$ is right invariant and all the balls are euclidean convex (and so are path connected) this setup satisfies property (iii). It has already been mentioned that the central result of \cite{LeDRig1} is that $d$ satisfies the Besicovitch covering property on $\mathbb{H}^n$ and hence on $\mathrm{H}^n$, which covers property (v). Property (ii), the multiplicative doubling property, is a consequence of the following lemma. We will also use this lemma to in the proof that property (iv) holds.

\begin{lem} \label{sep-pt}
Let $(\mathbb{H}^n,d)$ be as above and $\rho > 0$. There exists $N(\rho) \in \mathbb{N}$ such that there are $N$ open balls of radius $\rho/2$ centred in $B_1(0)$ whose union covers $B_1(0)$. Consequently:
\begin{enumerate}[\normalfont (i)]
\item if $p_1,...,p_n \in B_1(0)$ and for all $i \neq j$ $d(p_i,p_j) > \rho$ then $n \leq N$, and
\item if $p_1,...,p_n \in B_1(0)$ with $n \geq kN$ for some $k \in \mathbb{N}$ then there is a subset $I \subset \lbrace 1,...,n \rbrace$ of size at least $k$ with $d(p_i,p_j) < \rho$ for all $i,j \in I$.
\end{enumerate}
\end{lem}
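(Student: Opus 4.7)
The plan rests on the observation, already made after equation~(\ref{sphere}), that the unit ball $B_1(0)$ of $d$ coincides with the closed Euclidean unit ball in $\mathbb{C}^n \times \mathbb{R}$, together with the fact (stated just after equation~(\ref{metric})) that $d$ induces the Euclidean topology. These together make $B_1(0)$ compact, and therefore totally bounded, as a subset of $(\mathbb{H}^n, d)$. Once that is in hand the rest is a routine maximal-separated-set argument together with pigeonholing.

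First I would pick a maximal $\rho/2$-separated subset $\lbrace q_1, \ldots, q_N \rbrace \subseteq B_1(0)$, i.e.\ a subset with $d(q_i, q_j) \geq \rho/2$ for $i \neq j$ which is not properly contained in any such subset. Total boundedness of $B_1(0)$ ensures that every such set is finite; this defines $N(\rho) := N$. By maximality, every $p \in B_1(0)$ must satisfy $d(p, q_i) < \rho/2$ for some $i$ (else $p$ could be adjoined to the separated set). Hence the open balls $\lbrace B^\circ_{\rho/2}(q_i) \rbrace_{i=1}^N$ centred in $B_1(0)$ cover $B_1(0)$, as required.

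For part~(i), suppose $p_1, \ldots, p_n \in B_1(0)$ have $d(p_i, p_j) > \rho$ whenever $i \neq j$. Each $p_k$ lies in some covering ball $B^\circ_{\rho/2}(q_{j_k})$; if two distinct indices gave the same $q_{j_k}$, the triangle inequality would yield $d(p_k, p_l) < \rho$, a contradiction. Hence the map $k \mapsto j_k$ is injective, giving $n \leq N$. For part~(ii), if $n \geq kN$ then the pigeonhole principle supplies an index $j$ such that $I := \lbrace k : p_k \in B^\circ_{\rho/2}(q_j) \rbrace$ has $|I| \geq k$, and for any $a, b \in I$ the triangle inequality gives $d(p_a, p_b) < \rho$.

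There is no real obstacle here: the nontrivial content is all upstream, in the explicit computation that identifies the $d$-unit ball with the Euclidean unit ball and in Le Donne and Rigot's verification that $d$ induces the Euclidean topology. Once those are cited, the lemma reduces to the standard total-boundedness package. The only point requiring a touch of care is the distinction between open and closed balls of radius $\rho/2$ in building the cover, and this is why I take a maximal $\rho/2$-separated set rather than, say, a $\rho/2$-net: maximality automatically yields a cover by \emph{open} balls, which is the form in which (i) and (ii) are then cleanly deduced from the triangle inequality.
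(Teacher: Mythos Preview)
Your proposal is correct and follows essentially the same approach as the paper: compactness of $B_1(0)=B_{eucl}$ (since $d$ induces the Euclidean topology) gives the finite cover by open $\rho/2$-balls, and then (i) and (ii) follow from the triangle inequality and pigeonhole exactly as you describe. The only difference is cosmetic: the paper extracts the cover directly from compactness, while you construct it via a maximal $\rho/2$-separated set, which is the standard equivalent formulation.
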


\begin{proof}
Since the metric $d$ induces the euclidean topology the closed unit ball $B_1(0) = B_{eucl}$ is compact, and the existence of such an $N$ follows from this compactness. Part (i) is due to the fact that if two points lie in the same ball in the cover then they are $< \rho$ apart, and part (ii) uses this along with the pigeon-hole principle.
\end{proof}

In particular, if we let $r > 0$, $p \in \mathbb{H}^n$ and $\rho = 1$ in Lemma \ref{sep-pt} then $B_r(p) = \delta_r(B_{eucl}) \cdot p$ can be covered by $N(1)$ balls of radius $\frac{r}{2}$ (simply dilate and translate those used to cover $B_{eucl}$). This means exactly that $(\mathbb{H}^n,d)$ has the \textit{metric} doubling property. 

\begin{cor}
The sequence $B_m = B_m(0) \cap \mathrm{H}^n$ has the multiplicative doubling property.
\end{cor}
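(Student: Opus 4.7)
The plan is to reduce the problem to bounding $|B_{2k}|$ by a constant multiple of $|B_k|$, because right invariance of $d$ gives $B_k \cdot B_k \subseteq B_{2k}$ at once: for $g,h \in B_k$ one has $d(gh,0) \leq d(gh,h) + d(h,0) = d(g,0) + d(h,0) \leq 2k$. So MDP will follow once I show $|B_{2k}| \leq D|B_k|$ for some $D$ and all large $k$.

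To bound $|B_{2k}|$ I will combine two ingredients. The first is the metric doubling property of $(\mathbb{H}^n, d)$ noted directly after Lemma \ref{sep-pt}: for any fixed $\rho \in (0,1)$, applying the lemma and then dilating by $\delta_{2k}$ yields a cover of the continuous ball $B_{2k}(0) \subseteq \mathbb{H}^n$ by a fixed number $N = N(\rho)$ of balls $B_{\rho k}(p_1),\ldots,B_{\rho k}(p_N)$ centred at points $p_i \in \mathbb{H}^n$. The second is cocompactness of $\mathrm{H}^n$ in $\mathbb{H}^n$: because $d$ induces the Euclidean topology, any compact fundamental domain for the right action has finite $d$-diameter, and so there is a constant $C > 0$ such that every $p \in \mathbb{H}^n$ lies within $d$-distance $C$ of some point of $\mathrm{H}^n$. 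I then pick, for each $i$, a lattice point $q_i \in \mathrm{H}^n$ with $d(p_i,q_i) \leq C$, so that $B_{\rho k}(p_i) \subseteq B_{\rho k + C}(q_i) \subseteq B_k(q_i)$ as soon as $k \geq C/(1-\rho)$; taking $\rho = 1/2$ gives the usable threshold $k \geq 2C$.

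Intersecting with $\mathrm{H}^n$ and using right invariance (which sends $B_k(0) \cap \mathrm{H}^n$ onto $B_k(q_i) \cap \mathrm{H}^n = B_k \cdot q_i$, hence of cardinality $|B_k|$) then delivers $|B_{2k}| \leq N(1/2)\,|B_k|$, which is the MDP with constant $D = N(1/2)$ and threshold $K = \lceil 2C \rceil$. The only point in the argument that requires any genuine input beyond bookkeeping is the cocompactness claim, and this is just the well-known fact that $\mathrm{H}^n$ is a uniform lattice in $\mathbb{H}^n$ transferred to the metric $d$ via topological equivalence; I do not expect a serious obstacle.
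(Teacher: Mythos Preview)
Your proof is correct and uses the same two key ingredients as the paper's argument: the metric doubling of $(\mathbb{H}^n,d)$ coming from Lemma~\ref{sep-pt}, and the cocompactness of $\mathrm{H}^n$ in $\mathbb{H}^n$ (the paper's constant $s = \sup\{d(p,\mathrm{H}^n) : p \in \mathbb{H}^n\}$ is exactly your $C$). The difference is in how these are combined. The paper passes through Haar measure: it uses the separation of lattice points to convert $|B_m^2|$ into the measure of a disjoint union of small balls, applies the covering to bound $\nu(B_{2m+r}(0))$ by $N\,\nu(B_{m-s}(0))$, and then uses cocompactness and subadditivity to convert back to $|B_m|$. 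You instead shift the covering balls directly onto lattice centres via cocompactness and count, avoiding Haar measure and the lattice-separation step entirely. Your route is slightly more elementary and gives the cleaner constant $D = N(1/2)$; the paper's route has the minor advantage that the measure-theoretic framing would transfer verbatim to other cocompact lattices in $\mathbb{H}^n$ without checking that right translation by a lattice element preserves the lattice (though of course it does here).
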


\begin{proof}
Fix $0< r < \frac{1}{2} \inf{\lbrace d(p,q) : p,q \in \mathrm{H}^n, p \neq q \rbrace} = \frac{1}{2}$ and $$s = \sup{\lbrace d(p, \mathrm{H}^n) : p \in \mathbb{H}^n \rbrace} \leq \frac{1}{2} \sqrt{n + \sqrt{n^2 + 4}}.$$
Let $\nu$ be the right invariant Haar measure on $\mathbb{H}^n$ and $\rho = 2 \frac{m-s}{2m+r}$ where $m \in \mathbb{N}$ is taken sufficiently large to ensure $\rho \geq \frac{2}{3}$. Then by applying Lemma \ref{sep-pt} (between dilating by $(2m+r)^{-1}$ and $2m+r$) it follows that
\begin{align*}
|B_m^2| \nu(B_r(0)) 
= 		\nu\left(\bigcup_{p \in B_m^2} B_r(p) \right) 
&\leq 	\nu\left(B_{2m+r}(0) \right)\\
&\leq 	N(\rho) \, \nu(B_{m-s}(0)) \\
&\leq N \nu{\left( \bigcup_{p \in B_m} B_s(p) \right)} 
\leq N \nu{\left( B_s(0) \right)} |B_m|
\end{align*}
where $N = N(2/3)$. The result follows since balls with strictly positive radius have positive Haar measure.
\end{proof}

The remaining properties are (i), that $(\mathrm{H}^n,d)$ is well separable, and (iv), that $(\mathrm{H}^n,d)$ has finite intersection dimension with respect to $(\mathbb{H}^n,d)$. These require a bit more work, and are tackled in the next sections. From previous comments it is sufficient to show that $(\mathbb{H}^n,d)$ is well separable and has finite intersection dimension with respect to itself.

\subsection{Intersection dimension: the separation lemmas}

We start with the intersection dimension. Recall that in order to prove the intersection dimension is $\kappa$ we must show that given a sequence of points $p_1,...,p_m$ with $m \geq \kappa$ and thickened spheres about those points, with some conditions on the thickenings and radii, the intersection of these thickened spheres is empty. We will prove this in two stages. The first to repeatedly apply the principle that if, by increasing $m$, we can find a subsequence of arbitrary length with an additional property then we can replace the original sequence with this subsequence. The lemmas in this section will be used to impose these extra properties on the sequence. In the second stage we will use these to show that the resulting sequence of thickened spheres will have empty intersection if it is sufficiently long.

Given $p \in \mathbb{H}^n \setminus \lbrace 0 \rbrace$ let $\hat{p}$ be its unique dilate inside on the unit sphere, i.e. $\hat{p} = \delta_{1/\lambda} p$ where $\lambda = d(p,0) > 0$. We will call $\hat{p}$ the \textit{projection} (of $p$) onto the unit sphere.

The first lemma, below, will be used to show that if radii of the earlier spheres aren't too large compared to later ones, and $0$ is in their intersection, then their projections must be a fixed distance apart. This will allow us to assume that each radius is rather small compared to those preceding it.

\begin{lem}[Large scale separation] \label{LSS}
Let $p,q \in \mathbb{H}^n \setminus \lbrace 0 \rbrace$ with ${0 \in \tb{t}{r}{p} \cap \tb{\tilde{t}}{\tilde{r}}{q}}$ and $q \in \tb{t}{r}{p}$ where $t, \tilde{t} \geq 1$, $r \geq \tilde{r} \geq t \tilde{t} R$ and $R > 1$. Given $\epsilon \in (0,1)$ such that $\tilde{r} \geq \epsilon r$ there exists $\bar{R}(\epsilon) > 0$ such that if $R > \bar{R}$ then $$d(\hat{p},\hat{q}) \geq \frac{1}{2}\left(1 - \sqrt{1-\frac{\epsilon^2}{4}} \right) > 0.$$
\end{lem}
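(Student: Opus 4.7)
The plan is to normalise by a dilation and then reduce to a triangle-inequality estimate on the unit sphere.

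First, extract metric information from the hypotheses. From $0\in \tb{t}{r}{p}$ we obtain a point $y_0\in \partial\tilde B_r(p)$ with $\tilde d(0,y_0)\le t$; the triangle inequality then gives $|\lambda-r|\le t$, where $\lambda:=d(0,p)$. The same argument applied to the other two membership conditions yields $|\mu-\tilde r|\le \tilde t$ with $\mu:=d(0,q)$, and $|d(p,q)-r|\le t$. Since $r\ge\tilde r\ge t\tilde t R$ and $t,\tilde t\ge 1$, the ratios $t/r$, $\tilde t/\tilde r$, $\tilde t/r$ are each at most $1/R$.

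Next, apply the dilation $\delta_{1/\lambda}$; this is a group automorphism scaling $d$ by a factor of $1/\lambda$. Its images $\hat p=\delta_{1/\lambda}(p)$ (on the unit sphere) and $y:=\delta_{1/\lambda}(q)$ satisfy $s:=d(y,0)=\mu/\lambda$ and $d(y,\hat p)=d(p,q)/\lambda$; the bounds above give $s\ge \epsilon-O(1/R)$, $s\le 1+O(1/R)$, and $d(y,\hat p)\ge 1-O(1/R)$. Since $\hat q=\delta_{1/\mu}(q)=\delta_{1/s}(y)$, it remains to estimate $d(\hat p,\delta_{1/s}(y))$.

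The heart of the argument is the triangle inequality
\[ d(\hat p,\hat q)\;\ge\; d(\hat p,y)\;-\;d(y,\delta_{1/s}y), \]
together with the Heisenberg-specific bound $d(y,\delta_{1/s}y)\le \sqrt{|1-s^2|}$. To prove the latter, write $y=(W,\omega)$; as $\mathrm{Im}\langle W,W\rangle=0$, the group law simplifies to give $(\delta_{1/s}y)y^{-1}=((1/s-1)W,(1/s^2-1)\omega)$. Plugging this into the sphere equation (\ref{sphere}) and eliminating $\|W\|^2$ via $\|W\|^2/s^2+\omega^2/s^4=1$ reduces the problem to a one-variable maximisation in $u:=\omega^2/s^4\in[0,1]$; the maximum is attained at $u=1$ (the ``vertical'' limit $W=0$) and equals $\sqrt{|1-s^2|}$.

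Combining gives $d(\hat p,\hat q)\ge 1-\sqrt{1-\epsilon^2}-O(1/R)$. A direct comparison of second derivatives at $0$ (where both functions vanish) shows that $1-\sqrt{1-\epsilon^2}$ strictly exceeds $\tfrac{1}{2}\bigl(1-\sqrt{1-\epsilon^2/4}\bigr)$ for every $\epsilon\in(0,1)$, so choosing $\bar R=\bar R(\epsilon)$ large enough to absorb the error term yields the stated bound. The main obstacle is the sharp inequality $d(y,\delta_{1/s}y)\le \sqrt{|1-s^2|}$, which quantifies the Heisenberg-specific distortion of distances under dilation; once established, the rest is routine triangle-inequality bookkeeping.
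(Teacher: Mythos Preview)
Your proof is correct and follows essentially the same route as the paper: both normalise by the dilation $\delta_{1/d(0,p)}$, apply the triangle inequality in the form $d(\hat p,\hat q)\ge d(\hat p,\delta_\lambda\hat q)-d(\hat q,\delta_\lambda\hat q)$ (your $s$ is the paper's $\lambda$, your $y$ is the paper's $\delta_\lambda\hat q$), and then use the Heisenberg-specific bound $d(\hat q,\delta_\lambda\hat q)\le\sqrt{|1-\lambda^2|}$. The paper obtains this last inequality directly from the explicit distance formula~(\ref{metric}) via the elementary observation $|1-\lambda|^2\le|1-\lambda^2|$, whereas you derive it by a one-variable maximisation; both are valid. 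You also keep slightly sharper constants (reaching $1-\sqrt{1-\epsilon^2}$ rather than the paper's $1-\sqrt{1-\epsilon^2/4}$, since you use $s\ge\epsilon-O(1/R)$ instead of the cruder $\lambda\ge\epsilon/2$), which is why you need the final comparison of the two functions. One cosmetic point: in the case $s>1$ the error in $\sqrt{|1-s^2|}$ is $O(1/\sqrt R)$ rather than $O(1/R)$, but this does not affect the conclusion.
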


\begin{proof}
The triangle inequality ensures that $d(p,q) = r + s'$ and $d(p,0) = r + s$ for some $s, s'$ such that $|s|, |s'| \leq t$, and $d(q,0) = \tilde{r} + \tilde{s}$ for some $\tilde{s}$ with $|\tilde{s}| \leq \tilde{t}$. Therefore
\begin{align*}
\frac{r+s'}{r+s} = d(\hat{p},\delta_{\lambda}\hat{q}) \leq d(\hat{p},\hat{q}) + d(\hat{q},\delta_{\lambda}\hat{q})
\end{align*}
where $\lambda = \frac{\tilde{r}+\tilde{s}}{r+s}$. Note that
$$\frac{\epsilon}{2} \leq \frac{\epsilon-R^{-1}}{1+R^{-1}} \leq \lambda = \frac{\tilde{r}/r+\tilde{s}/r}{1+s/r} \leq \frac{1 + R^{-1}}{1 - R^{-1}}$$
for $R$ sufficiently large, depending on $\epsilon$.
Since $|1-\lambda|^2 \leq |1 - \lambda^2|$ it follows from (\ref{metric}) that 
\begin{align*}
d(\hat{q},\delta_{\lambda}\hat{q}) \leq \sqrt{|1 - \lambda^2|} d(0,\hat{q}) = \sqrt{|1 - \lambda^2|}.
\end{align*}
Therefore if $\lambda \leq 1$ then
\begin{align*}
d(\hat{p},\hat{q}) \geq 1 - \frac{2R^{-1}}{1 + R^{-1}} - \sqrt{1-\frac{\epsilon^2}{4}} \geq \frac{1}{2}\left(1-\sqrt{1-\frac{\epsilon^2}{4}} \right)
\end{align*}
for $R$ sufficiently large. Otherwise if $\lambda > 1$ then 
\begin{align*}
d(\hat{p},\hat{q}) \geq 1 - \frac{2R^{-1}}{1 + R^{-1}} - \sqrt{\frac{2R^{-1}}{1 - R^{-1}}} \geq \frac{1}{2}\left(1-\sqrt{1-\frac{\epsilon^2}{4}} \right)
\end{align*}
again for $R$ large enough.
\end{proof}

For the purposes of the remainder of this section it is useful to introduce a coordinate system on $\mathbb{H}^n$ which exploits the dilations and the fact that the unit sphere of $d$ is the Euclidean unit sphere. It is here that we are directly using properties of $(\mathbb{H}^n,d)$.

Given $p \in \mathbb{H}^n \setminus \lbrace 0 \rbrace$ let $\lambda_p = d(p,0) > 0$. Then $\hat{p} = \delta_{1/\lambda_p} p = (z_p,\tau_p)$ for some unique $z_p \in \mathbb{C}^n$ and $\tau_p \in \mathbb{R}$ with $\|z_p\|^2 + \tau_p^2 = 1$. In addition, using complex coordinates we have $\rho(p) = \left(\rho_i(p) \right)_{j = 1}^n \in S^n$, the Euclidean unit sphere, and $\phi(p) = \left(\phi_j(p) \right)_{j=1}^n \in (-\pi,\pi]^n$ such that $z_p = \left(\rho_j(p)\exp{[ i\phi_j(p) ]} \right)_{j=1}^n$. Given also $q \in \mathbb{H}^n \setminus \lbrace 0 \rbrace$ for each $1 \leq j \leq n$ let $\phi_j(p,q) \in \left[0,\pi \right)$ denote the magnitude of the angle between $\exp{[i\phi_p(j)]}$ and $\exp{[i\phi_q(j)]}$ in $\mathbb{C}$. 

By applying Lemma \ref{sep-pt} we will be able to assume that $\hat{p}_1,...,\hat{p}_m$ are close on the unit sphere, and Lemma \ref{LSS} will then allow us to assume that the radius of each sphere is small compared to the previous one. This when we will use the following small scale separation lemmas to narrow down the possible positions of $\hat{p}_1,...,\hat{p}_m$ relative to one another. 

\begin{lem}[Small scale separation 1] \label{smalltau}
Given any $\bar{\tau} \in (0,1)$ there exist $\bar{\rho},\bar{R},\bar{\phi},\bar{\epsilon} > 0$ for which the following holds. Let $p,q \in \mathbb{H}^n \setminus \lbrace 0 \rbrace$ with $q \in \tb{t}{r}{p}$ and $0 \in \tb{t}{r}{p} \cap \tb{\tilde{t}}{\tilde{r}}{q}$ where $t, \tilde{t} \geq 1$ and suppose $r \geq \tilde{r} \geq t \tilde{t} R$ for some $R > 1$. Suppose also that $\tilde{r} \leq \epsilon r$. If  $R > \bar{R}$, $\epsilon < \bar{\epsilon}$, $|\tau_ p| \leq \bar{\tau}$ and $\max_{1 \leq i \leq n} \phi_i(p,q) < \bar{\phi}$ then $d(\hat{p},\hat{q}) > \bar{\rho}$.
\end{lem}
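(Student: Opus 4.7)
The plan is to assume $d(\hat{p},\hat{q}) \leq \bar{\rho}$ for contradiction and to derive incompatible bounds on the bilinear quantity
$$\psi := 2\re{\langle z_p, z_q\rangle} + \tau_p \im{\langle z_p, z_q\rangle}.$$
For the lower bound, applying the defining formula (\ref{ball}) to $\hat{p}\hat{q}^{-1}$ gives $\|z_p - z_q\| \leq d(\hat{p},\hat{q}) \leq \bar{\rho}$, so
$$\re{\langle z_p, z_q\rangle} = \tfrac{1}{2}\bigl(\|z_p\|^2 + \|z_q\|^2 - \|z_p - z_q\|^2\bigr) \geq \tfrac{1}{2}(1 - \bar{\tau}^2 - \bar{\rho}^2)$$
since $\|z_p\|^2 \geq 1 - \bar{\tau}^2$. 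Writing $\langle z_p, z_q\rangle = \sum_j \rho_j(p)\rho_j(q) e^{i\alpha_j}$ with $|\alpha_j| = \phi_j(p,q) < \bar{\phi}$, each term in
$$\psi = \sum_j \rho_j(p)\rho_j(q)\bigl(2\cos\alpha_j + \tau_p \sin\alpha_j\bigr)$$
is bounded below by $(2\cos\bar{\phi} - \bar{\tau}\sin\bar{\phi})\rho_j(p)\rho_j(q)$, and since $\sum_j \rho_j(p)\rho_j(q) \geq \re{\langle z_p, z_q\rangle}$ one obtains $\psi \geq (2\cos\bar{\phi} - \bar{\tau}\sin\bar{\phi})\re{\langle z_p, z_q\rangle}$. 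Choosing $\bar{\phi}$ and $\bar{\rho}$ small enough (in terms of $\bar{\tau}$) makes this at least $c_0 := \tfrac{1}{4}(1 - \bar{\tau}^2) > 0$.

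For the upper bound, I compute $pq^{-1}$ from $p = \delta_{\lambda_p}\hat{p}$ and $q = \delta_{\lambda_q}\hat{q}$, and apply the sphere equation (\ref{sphere}) with $r' = d(p,q)$. Setting $\mu = \lambda_q/\lambda_p$, $u = r'/\lambda_p$, $X = \|z_p - \mu z_q\|^2$ and $Y = \tau_p - \mu^2 \tau_q - \tfrac{\mu}{2}\im{\langle z_p, z_q\rangle}$, the sphere equation rearranges to $u^2(u^2 - X) = Y^2$, or equivalently $Y^2 + X - 1 = \eta(2 - X + \eta)$ with $\eta = u^2 - 1$. Expanding $Y^2 + X$ and using $\|z_p\|^2 + \tau_p^2 = 1$ yields
$$Y^2 + X - 1 = -\mu\psi + \mu^2 N + \mu^3 \tau_q\im{\langle z_p, z_q\rangle} + \mu^4 \tau_q^2,$$
where $N := \|z_q\|^2 - 2\tau_p\tau_q + \tfrac{1}{4}(\im{\langle z_p, z_q\rangle})^2$ is bounded by an absolute constant. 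Since the hypotheses $q, 0 \in \partial_t B_r(p)$ force $|r' - \lambda_p| \leq 2t$ and hence $|\eta| = O(t/r)$, we obtain $|\mu\psi - \mu^2 N| = O(t/r) + O(\mu^3)$.

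The decisive step is now a uniform estimate of $t/(r\mu)$. The hypothesis $\tilde{r} \geq t\tilde{t}R$ with $\tilde{t} \geq 1$ gives $t \leq \tilde{r}/R$; the ranges of $\lambda_p$ and $\lambda_q$ give (once $\bar{R}$ is taken at least a small absolute constant) both $\mu \geq \tilde{r}/(4r)$ and $\mu \leq 2\epsilon$, so that $r\mu \geq \tilde{r}/4 \geq tR/4$. Dividing the inequality of the previous paragraph by $\mu$ and using $t/(r\mu) \leq 4/R$ produces
$$|\psi| \leq 4\mu + O(\mu^2) + O(1/R) \leq 8\bar{\epsilon} + O(\bar{\epsilon}^2) + O(1/\bar{R}),$$
and choosing $\bar{\epsilon}$ small and $\bar{R}$ large in terms of $\bar{\tau}$ makes this strictly less than $c_0$, contradicting the lower bound.

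The hard part is precisely the need for this uniform bound $t/(r\mu) = O(1/R)$. A cruder use of only $t/r \leq \epsilon/R$ and $\mu \leq \epsilon$ produces $|\psi| = O(\epsilon) + O(\epsilon/(R\mu))$, whose second term blows up for small $\mu$ (equivalently for large $r$ with $\tilde{r}$ held fixed) and yields no control on $\psi$. It is essential that the stack hypothesis $\tilde{r} \geq t\tilde{t}R$ supplies a lower bound on $\mu$ that exactly absorbs the factor $\tilde{r}$ appearing in $t \leq \tilde{r}/R$.
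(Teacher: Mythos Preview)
Your proof is correct and follows essentially the same route as the paper: both expand the sphere equation for $d(p,q)$, rescale, and show that the bilinear form $\psi = 2\re\langle z_p,z_q\rangle + \tau_p\,\im\langle z_p,z_q\rangle$ is $O(\epsilon)+O(R^{-1})$, using the hypothesis $\tilde r \geq t\tilde t R$ in exactly the way you highlight to absorb the thickening error. The only cosmetic difference is in the endgame: the paper uses the smallness of $\psi$ (via the angle condition) to bound $\re\langle z_p,z_q\rangle$ and then computes that $\tau_q^2$ must exceed a threshold strictly above $\bar\tau^2$, placing $\hat p$ and $\hat q$ in disjoint latitude bands on the unit sphere; you instead assume $d(\hat p,\hat q)\leq\bar\rho$ and turn the same ingredients into a lower bound $\psi\geq c_0>0$, reaching a direct contradiction --- arguably a little cleaner, but the same argument at heart.
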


The condition that $\tau_p$ is bounded away from $\pm 1$ is the crucial feature distinguishing this lemma, and its proof, from the similar second small scale separation lemma which follows. This condition ensures that $0$ and $q$, as in the statement, are not too close to the `poles' of $B_r(p)$ where the first order euclidean behaviour (corresponding to $\| z_p \|$) becomes negligible. This means that to prove this lemma we are able to just use the lower order terms to control the size of $z_q$, and hence ensure $\tau_q$ is large enough for $\hat{p}$ and $\hat{q}$ to be separated by an appropriate distance $\bar{\rho}$.

\begin{proof}
Using the isometries of $d$, see (\ref{isometries}), we may assume that $\tau_p \geq 0$  and $\phi(p) = 0$ without loss of generality. Setting $\phi = \phi(q)$ we therefore have 
	$$\max_{1 \leq i \leq n} |\phi_i| = \max_{1 \leq i \leq n} \phi_i(p,q) < \bar{\phi},$$ 
and of course $z_p = \re{z_p}$.

Our assumptions mean that
\begin{align*}
	p =\begin{pmatrix} (r+s) z_p \\ (r+s)^2 \tau_p \end{pmatrix} 
	&& \text{and} &&  
	q = \begin{pmatrix} (\tilde{r} + \tilde{s}) z_q  \\ (\tilde{r} + \tilde{s})^2 \tau_q \end{pmatrix}
\end{align*}
for some $s,\tilde{s}$ with $|s| \leq t$ and $|\tilde{s}| \leq \tilde{t}$. Let $a = \frac{r+s}{r+s'}$ and $b =  \frac{\tilde{r} + \tilde{s}}{r+s'}$. As $d(p,q) = r+s'$, some $|s'| \leq t$, using equation (\ref{sphere}) we know that
\begin{align*}
	1 &= 	\left\| a z_p - b z_q \right\|^2 
			+ \left( a^2 \tau_p - b^2 \tau_q - \frac{1}{2} a b \, \im{\langle z_p, z_q \rangle} \right)^2 \\
	&= 		a^2 \left\|  z_p \right\|^2 + b^2\left\|  z_q \right\|^2 - 2 a b \, \re{\langle z_p, z_q \rangle}    \\
	&		\phantom{bunchofstuf} + a^4 \tau_p^2 + b^4 \tau_q^2  + \frac{1}{4} \left(a b \, \im{\langle z_p, z_q 	\rangle} \right)^2 - \left( a^2 \tau_p - b^2 \tau_q \right)a b \, \im{\langle z_p, z_q \rangle} \tag{$\dagger$}
\end{align*}
where we have used the linearity properties of the inner product.
Observe that 
\begin{align*}
	a-1 = \frac{r+s}{r+s'} - 1 = \frac{\tilde{r}}{r} \frac{s/\tilde{r}-s'/\tilde{r}}{1+s'/r}  & &  \text{ and } & & b - \frac{\tilde{r}}{r} = \frac{\tilde{r} + \tilde{s}}{r+s'} - \frac{\tilde{r}}{r} = \frac{\tilde{r}}{r}\frac{\tilde{s}/\tilde{r} - s'/r }{1+s'/r}
\end{align*}
and since $r \geq \tilde{r} \geq t \tilde{t} R$ the rightmost fraction in each of these equalities is $O(R^{-1})$, independent of all other variables, as $R \to \infty$. By recalling that $\| z_p \|^2 + \tau_p^2 = 1$, and similarly with $q$, we can use this observation to reduce ($\dagger$) to
\begin{align*}
1 &= \left\|  z_p \right\|^2 - 2 \frac{\tilde{r}}{r} \re{\langle z_p, z_q \rangle} + \tau_p^2 - \tau_p \frac{\tilde{r}}{r} \im{\langle z_p, z_q \rangle} + O\left(\frac{\tilde{r}^2}{r^2}\right) + \frac{\tilde{r}}{r} E
\end{align*}
where $E$ is also an $O(R^{-1})$ error term. We can now subtract $\| z_p \|^2 + \tau_p^2 = 1$ and divide by a factor of $\frac{\tilde{r}}{r}$ to see that
\begin{align*}
2 \, \re{\langle z_p, z_q \rangle} + \tau_p \, \im{\langle z_p, z_q \rangle} = E + O\left( \frac{\tilde{r}}{r} \right).
\end{align*}
Note that $ \re{\langle z_p, z_q \rangle} = \sum_{j=1}^n \rho_j(p) \rho_j(q) \cos{\phi_j}$ and $\im{\langle z_p, z_q \rangle} = \sum_{j=1}^n \rho_j(p) \rho_j(q) \sin{\phi_j}$, and so if we take $\bar{\phi}$ small enough to ensure for each $j$ we have
$$\cos{\phi_j} + \tau_p \sin{\phi_j} \geq \cos{\phi_j} - |\sin{\phi_j}| \geq 0$$
then $0 \leq \re{\langle z_p, z_q \rangle} \leq 2 \, \re{\langle z_p, z_q \rangle} + \tau_p \, \im{\langle z_p, z_q \rangle}$ hence $|\re{\langle z_p, z_q \rangle}| \leq |E| + O\left( \frac{\tilde{r}}{r} \right)$.

Now suppose that $d(\hat{p},\hat{q}) \leq 1 - \bar{\tau}^2$ then $\| z_p - z_q \|^2 \leq 1 - \bar{\tau}^2$ and
\begin{align*}
\tau_q^2 = 1 - \| z_q \|^2 
&= 		1 + \| z_p \|^2 - \| z_p - z_q \|^2 - 2 \, \re{\langle z_p, z_q \rangle}  \\
&\geq 	1 + (1 - \bar{\tau}^2) - (1 - \bar{\tau}^2) - 2 \, \re{\langle z_p, z_q \rangle} \\
&> \frac{1 + \bar{\tau}^2}{2} > \bar{\tau}^2 \geq \tau_p^2
\end{align*}
where we have ensured that $\bar{\epsilon}$ and $\bar{R}^{-1}$ small enough for $4 |\re{\langle z_p, z_q \rangle} | < 1 - \bar{\tau}^2$.
It follows that
$$d(\hat{p},\hat{q}) > \frac{1}{2} d\left(\lbrace h \in \partial B_1(0) : \tau_h^2 \leq \bar{\tau}^2 \rbrace ,\left\lbrace h \in \partial B_1(0) : \tau_h^2 \geq \frac{1 + \bar{\tau}^2}{2} \right\rbrace \right) > 0$$
and so we also take $\bar{\rho} > 0$ as the minimum of $1 - \bar{\tau}^2$ and this value to complete the proof. 
\end{proof}

\begin{lem}[Small scale separation 2] \label{largetau}
There exists $\bar{\tau} \in (\frac{1}{2},1)$ and $\bar{\rho},\bar{R},\bar{\phi},\bar{\epsilon} > 0$ for which the following holds. Let $p,q \in \mathbb{H}^n \setminus \lbrace 0 \rbrace$ with $q \in \tb{t}{r}{p}$ and $0 \in \tb{t}{r}{p} \cap \tb{\tilde{t}}{\tilde{r}}{q}$ where $t, \tilde{t} \geq 1$ and suppose $r \geq \tilde{r} \geq T R$ for some $R > 1$ and $T \geq t \tilde{t}$. Let $I(p) = \left\lbrace i : \rho_i(p) < \frac{10T}{r}  \right\rbrace$, $\epsilon \in (0,1)$ and assume that $\tilde{r} \leq \epsilon r$. If $R > \bar{R}$, $\epsilon < \bar{\epsilon}$, $|\tau_ p| \geq \bar{\tau}$ and $\max_{1 \leq i \leq n} \phi_i(p,q) < \bar{\phi}$, then either there exists $i \not \in I_p$ such that $\rho_i(q) < \frac{10T}{\tilde{r}}$ or $d(\hat{p},\hat{q}) > \bar{\rho}$.
\end{lem}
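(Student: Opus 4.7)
My plan is to imitate the proof of Lemma \ref{smalltau} up to the derivation of the key constraint, and then split into cases depending on the size of $\beta := \tilde r/r$. First, I would apply the isometries (\ref{isometries}) to reduce to the case $\tau_p \geq \bar\tau > 0$ and $\phi(p) = 0$, so that $z_p \in \mathbb{R}^n_{\geq 0}$. Parameterising $p$ and $q$ via $a = (r+s)/(r+s')$ and $b = (\tilde r + \tilde s)/(r+s')$ and expanding the identity $d(p,q)=r+s'$ as in the proof of Lemma \ref{smalltau}, and using that $r \geq \tilde r \geq t\tilde t R$ so every $a$-correction carries a hidden factor of $\beta$, one obtains
\begin{equation*}
2\re{\langle z_p, z_q\rangle} + \tau_p\im{\langle z_p, z_q\rangle} = \beta\bigl(\|z_q\|^2 - 2\tau_p\tau_q\bigr) + O(\beta^2) + O(1/R).
\end{equation*}

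I would then argue by contradiction: suppose both alternatives fail, i.e.\ $\rho_i(q) \geq 10T/\tilde r$ for all $i \notin I(p)$ and $d(\hat p, \hat q) \leq \bar\rho$. From the second assumption, the lower bound $d(\hat p, \hat q)^2 \geq |\tau_p - \tau_q - \tfrac{1}{2}\im{\langle z_p, z_q\rangle}|$ coming from (\ref{metric}), and the Cauchy--Schwarz estimate $|\im{\langle z_p, z_q\rangle}| \leq \|z_p\|\|z_q\| \leq \sqrt{1-\bar\tau^2}$, one obtains $|\tau_p - \tau_q| \leq \bar\rho^2 + \tfrac{1}{2}\sqrt{1-\bar\tau^2}$. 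Choosing $\bar\tau$ close enough to $1$ (in particular $\bar\tau > 1/\sqrt 3$) then forces $\Psi := \|z_q\|^2 - 2\tau_p \tau_q \approx 1 - 3\tau_p^2$ to be bounded strictly below zero, say $\Psi \leq -c < 0$. Simultaneously the angle hypothesis $|\phi_i(q)| < \bar\phi$ makes each coefficient $2\cos\phi_i + \tau_p\sin\phi_i$ strictly positive, so the left-hand side of the displayed identity equals $\sum_i \rho_i(p)\rho_i(q)(2\cos\phi_i + \tau_p\sin\phi_i) \geq 0$. Plugging in yields $0 \leq -c\beta + O(\beta^2 + 1/R)$, a contradiction in the regime where $\beta$ is large compared to $1/R$.

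The remaining case $\beta \lesssim 1/R$ (equivalently, $r$ much larger than $\tilde r R$) is the most delicate, and is where I expect the first alternative in the conclusion to be essential. There the leading-order identity above is too weak, and I would push the expansion of $d(p,q)=r+s'$ to the next order in $\beta$ in order to capture the paraboloidal shape $\sigma \approx \tfrac{1}{2}\|w\|^2$ of $\partial B_r(p)$ near the pole. Using this higher-order equation I would estimate $\rho_i(q)$ for each $i \notin I(p)$, and play this against the lower bound $\|z_q\|^2 \geq |I(p)^c|(10T/\tilde r)^2$ provided by the assumed failure of the first alternative to force a separation between $\hat p$ and $\hat q$. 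Calibrating the five constants $\bar\tau, \bar\rho, \bar R, \bar\phi, \bar\epsilon$ so that both the large-$\beta$ and small-$\beta$ cases close uniformly in $T, t, \tilde t$ will be the main obstacle.
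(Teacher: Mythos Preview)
Your large-$\beta$ argument is clean and correct: once $\beta \gg 1/R$ the order-$\beta$ term $\beta(\|z_q\|^2 - 2\tau_p\tau_q)$ dominates and the sign argument closes. This is genuinely simpler than what the paper does in that regime.

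The gap is in the small-$\beta$ case. The $O(1/R)$ error in your displayed identity does not come from truncating the $\beta$-expansion; it comes from the thickening perturbations $s,s',\tilde s$ hidden in $a$ and $b$ (concretely, from $(a^2\|z_p\|^2+a^4\tau_p^2-1)/\beta = 2(1+\tau_p^2)(s-s')/\tilde r + \ldots$). Pushing to the next order in $\beta$ leaves this term untouched, so your stated plan cannot close the case $\beta \lesssim 1/R$. Likewise, the global bound $\|z_q\|^2 \geq |I(p)^c|(10T/\tilde r)^2$ is only of order $1/R^2$ and gives nothing against the $O(1/R)$ noise; the exceptional condition $\rho_i(q)\geq 10T/\tilde r$ has to be used coordinate-wise against specific thickening errors, not as a crude lower bound on $\|z_q\|$.

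The paper handles this by a device you are missing: it first perturbs to points $P=p\eta$, $Q=q'\eta$ chosen so that $d(0,P)=d(P,Q)=r$ \emph{exactly}, pushing all the thickening noise into the coordinates of $P,Q$ rather than into the sphere equation. One can then solve explicitly for the last coordinate $\sigma$ of $Q$, Taylor-expand the resulting square root, and bound $\sigma/\tilde r^2$ uniformly in $\beta$. In that expansion the thickening residues reappear as additive errors of size $O(t/\tilde r)$ inside $\langle z,w\rangle$; the coordinate-wise hypotheses $\rho_j(p)\geq 10T/r$ and $\rho_j(q)\geq 10T/\tilde r$ for $j\notin I(p)$ are exactly what is needed to make the combination $-\sin\phi_j - 2(r^2/\tau)\cos\phi_j \leq -1$ absorb them. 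That is the mechanism by which the first alternative enters, and it is what your sketch for small $\beta$ would need to replace.
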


In this lemma we aim for the same conclusion as in the first small scale separation lemma but find an exceptional case. This we deal with later by using a slightly more sophisticated bounding argument.

As remarked above, in the setting of this lemma the first order argument used to prove Lemma \ref{smalltau} is not available to us; the argument stalls if $z_p$ can be made arbitrarily small. Instead we must make delicate use of the precise shape of $B_r(p)$ near the poles. This results in a somewhat more technical proof where special care must be paid to the thickenings, which in this case are large enough to easily throw off the estimates.

\begin{proof}
As before we may assume that $\tau_p \geq 0$  and $\phi(p) = 0$, so $z_p = \re{z_p}$, without loss of generality. Again we set $\phi = \phi(q)$ so that $\max_{1 \leq i \leq n} |\phi_i| = \max_{1 \leq i \leq n} \phi_i(p,q) < \bar{\phi}$.

To keep track of a large quantity of error terms, we will slightly abuse the big $O$ and little $o$ notations. Throughout we shall write $O(x)$ for any function $f : \mathbb{R} \to \mathbb{R}$ (possibly depending on our variables) for which, by first taking $\bar{\tau}$ sufficiently close to $1$, then $\bar{\epsilon}$ sufficiently small and $\bar{R}$ sufficiently large we can ensure $|f(x)| \leq K |x|$ for some $K > 0$ independent of all other variables. Similarly we will write $o(x)$ for any function $f(x)$ for which given any $\delta > 0$, with the same control over $\bar{\epsilon}, \bar{R}$ and $\bar{\tau}$, we can ensure $|f(x)| \leq \delta |x|$.

Our approach is to attempt to bound $\tau_q$ above by some constant $C < 1$. When successful, it will then suffice to take 
$$\bar{\rho} < \frac{1}{2} d\left( (0,0,1), \lbrace h \in \partial B_1 : \tau_h \leq C \rbrace \right)$$ 
since $\bar{\tau}$ can be increased to ensure $\hat{p}$ is arbitrarily close to $(0,0,1)$. We will encounter an exceptional case to account for the `either'. Firstly, if $\|z_q \| \geq \frac{1}{2}$ then $\tau_q^2 \leq \frac{3}{4}$ so we may assume that $\|z_q \| \leq \frac{1}{2}$.

\begin{flushleft}
	\textbf{Step 1: Perturb $p$ and $q$ to suppress the thickenings}
\end{flushleft}

We are now going to introduce some new points which incorporate the errors due to the thickenings; this enables us to keep the errors under sufficient control to be dealt with later. Let $\eta \in B_t(0)$ such that $d(\eta^{-1}, p) = r$ and $q' \in B_t(q)$ such that $d(q',p) = r$. Let $P = p \, \eta$ and $Q = q' \eta$. For notational simplicity we let $P = (z,\tau)$ and $Q = (w,\sigma)$. We can write these variables more explicitly using the coordinates of $p, q$ and $\eta$: for some $s,s_\eta$ with $|s|,|s_\eta| \leq t$ we have
\begin{align}
z = (r+s)z_p + s_\eta z_\eta & & \text{and} & & \tau = (r+s)^2\tau_p + s_\eta^2 \tau_\eta + \frac{1}{2} \im{\langle (r+s)z_p ,s_\eta z_\eta  \rangle }, \label{z&taucoord}
\end{align}
and (using $d(q,q') \leq t$) there are $\zeta \in \mathbb{C}^n$ and $\zeta_\tau \in \mathbb{R}$ such that $\| \zeta \|, |\zeta_\tau| \leq t$ for which
\begin{align}
w = (\tilde{r} + \tilde{s}) z_q + \zeta + s_\eta z_\eta \label{wcoord}
\end{align}
and
\begin{align*}
\sigma &=  \left[ (\tilde{r} + \tilde{s})^2 \tau_q + \frac{1}{2} \im{\langle(\tilde{r} + \tilde{s}) z_q + \zeta,(\tilde{r} + \tilde{s}) z_q   \rangle} + \zeta_\tau \right] + s_\eta^2 \tau_\eta + \frac{1}{2} \im{\langle (\tilde{r} + \tilde{s}) z_q + \zeta ,s_\eta z_\eta \rangle}.
\end{align*}
This final expression is somewhat complicated, but by considering the dominant $\tilde{r}^2$ term and noting $t,\tilde{t} \leq R^{-1} \tilde{r}$ it becomes clear that
$$\sigma = \tau_q \tilde{r}^2 + o(\tilde{r}^2) = O(\tilde{r}^2).$$
Therefore, to bound $\tau_q$ above by some $C < 1$ it will suffice to do so for $\frac{\sigma}{\tilde{r}^2}$.

To do this we will use the fact that, by the right invariance of the metric, $d(P,Q) = r$ and $d(0,P) = r$.  The first of these properties ensures that 
\begin{align*}
\frac{\|z - w \|^2}{r^2} + \frac{( \tau - \sigma - \frac{1}{2}\im{ \langle z ,w \rangle })^2}{r^4} = 1
\end{align*}
and hence that $\sigma$ is given by one of the roots
\begin{align*}
\tau - \frac{1}{2}\im{ \langle z,w \rangle } \pm r^2 \sqrt{1 - r^{-2}\|z - w \|^2}.
\end{align*}

\begin{flushleft}
	\textbf{Step 2: Apply Taylor's theorem to the square root}
\end{flushleft}

The fact that $d(0,P) = r$ means
\begin{align*}
\frac{\|z \|^2}{r^2} + \frac{\tau^2}{r^4} = 1
\end{align*}
and so as long as $\tau > 0$, which we will see just below, we have
\begin{align*}
r^2 \sqrt{1 - r^{-2}\|z - w \|^2} = \tau \sqrt{1 - \frac{r^2}{\tau^2}\left( \| w \|^2 - 2\re{\langle z,w \rangle} \right)}.
\end{align*}
Using the coordinates expressions in (\ref{z&taucoord})
\begin{align*}
\frac{\tau}{r^2} 
= \left(1+\frac{s}{r}\right)^2 \tau_p+ \frac{s_\eta^2 }{r^2}\tau_\eta +  \frac{1}{2} \left(1+\frac{s}{r} \right)  \frac{s_\eta}{r} \im{\langle  z_p,z_\eta  \rangle }
= \tau_p + o(1).
\end{align*}
In particular are able to ensure that $0 < \frac{1}{2}\bar{\tau} \leq r^{-2}\tau \leq \frac{3}{2}$. Additionally, we have that
$$\left\| \frac{w}{r} \right\| \leq  \frac{\tilde{r} + |\tilde{s}|}{r}    + \frac{t}{r} +  \frac{|s_\eta|}{r}   \leq \epsilon + \frac{3}{R}$$
and so $\left\| \frac{w}{r} \right\| = o(1)$. As 
\begin{align*}
r^{-2} \left| \| w \|^2 - 2\re{\langle z,w \rangle} \right| \leq   \left\| \frac{w}{r} \right\| \left(1 + 2 \left\| \frac{w}{r} \right\| \right)
\end{align*}
we also have $r^{-2} (\| w \|^2 - 2\re{\langle z,w \rangle}) = o(1)$. Combining these observations shows that
\begin{align*}
E = \frac{r^2}{\tau^2}\left( \| w \|^2 - 2\re{\langle z,w \rangle} \right) = \frac{1}{(r^{-2}\tau)^2}r^{-2}\left( \| w \|^2 - 2\re{\langle z,w \rangle} \right) = o(1).
\end{align*}
With $\bar{\tau}$, $\bar{\epsilon}$ and $\bar{R}$ sufficiently well chosen we can therefore apply Taylor's theorem to see that
\begin{align*}
\tau \sqrt{1 - \frac{r^2}{\tau^2}\left( \| w \|^2 - 2\re{\langle z,w \rangle} \right)}
= \tau\sqrt{1 - E} = \tau \left(1 - \frac{E}{2} + O\left( E^2 \right) \right)
\end{align*}
and hence that $\sigma$ is one of
$$\tau - \frac{1}{2}\im{ \langle z,w \rangle } \pm \left(\tau - \frac{\tau E}{2} + O\left( \tau E^2 \right) \right).$$

In principle, as $\tau = r^2 \tau_p + o(r^2)$ and $\tau_p \geq \bar{\tau} > \frac{1}{2}$, $\tau$ can be very large. This will cause problems bounding $\sigma$ if it is ever given by the positive root. However, we have already seen that $\sigma = O(\tilde{r}^2)$, $\tau = O(r^2)$ and $E = o(1)$ and so if $\sigma$ were given by the positive root then $ \tau = o(r^2)$, contradicting $\tau_p > \frac{1}{2}$. Therefore
$$\sigma = - \frac{1}{2}\im{ \langle z,w \rangle }  + \frac{\tau E}{2} + O\left( \tau E^2 \right)$$
and we just need to find appropriate bounds for the remaining terms.

\begin{flushleft}
	\textbf{Step 3: Bounding $\tau E^2$}
\end{flushleft}

Observe that
\begin{align*}
\tau E^2 &= \frac{r^4}{\tau^3}\left( \| w \|^2 - 2\re{\langle z,w \rangle} \right)^2 \\
		&\leq  \frac{\tilde{r}^2}{(r^{-2}\tau)^3} \left( \left\| \frac{w}{r} \right\| \left\| \frac{w}{\tilde{r}} \right\| + 2\left|\re{\left\langle \frac{z}{r},\frac{w}{\tilde{r}} \right\rangle}\right| \right)^2 \\
		&\leq  2^6 \left\| \frac{w}{\tilde{r}} \right\|^2  \left( \left\| \frac{w}{r} \right\|   +2\left(1+ \frac{|s|}{r} \right)\|z_p\|+ \frac{2|s_\eta|}{r}\right)^2  \tilde{r}^2 
\end{align*}
where we have used $r^{-2}\tau \geq \frac{1}{2}\bar{\tau} \geq \frac{1}{4}$ and the coordinate expression for $z$. Noting that
$$\left\| \frac{w}{\tilde{r}} \right\| \leq   1 + \frac{|\tilde{s}|}{\tilde{r}} + \frac{t}{\tilde{r}} +  \frac{|s_\eta|}{\tilde{r}}  \leq 2$$
for $\epsilon$ and $R^{-1}$ sufficiently small, and that $\|z_p\| = \sqrt{1-\tau_p^2}$, we see that
\begin{align*}
\tau E^2 \leq  2^8 \left( \epsilon + \frac{5}{R} + 2\left(1+ R^{-1} \right) \sqrt{1-\bar{\tau}^2} \right)^2 \tilde{r}^2.
\end{align*}
This means $\tau E^2 = o(\tilde{r}^2)$, which will be sufficient.

\begin{flushleft}
	\textbf{Step 4: Bounding the explicit terms in the non-exceptional case}
\end{flushleft}

This is the most technical step in the proof, but is not fundametally difficult.
\begin{align*}
-\frac{1}{2}\im{ \langle z,w \rangle } + \frac{\tau E}{2} 
    &= -\frac{1}{2}\im{ \langle z,w \rangle } + \frac{r^2}{2\tau}\left( \| w \|^2 - 2\re{\langle z,w \rangle} \right) \\
    &=  \frac{1}{2}\left( -\im{ \langle z,w \rangle } - 2 \frac{r^2}{\tau} \re{\langle z,w \rangle} \right) + \frac{r^2}{2\tau} \| w \|^2.
\end{align*}
We aim to show the term inside the bracket is non-positive, modulo a small error term. We have
\begin{align*}
\langle z,w \rangle 
&= 	\langle (r+s)z_p,(\tilde{r} + \tilde{s}) z_q + \zeta + s_\eta z_\eta \rangle + 		o(\tilde{r}^2) \\
&= 	(r+s)\tilde{r} \left\langle z_p,\left(1 + \frac{\tilde{s}}{\tilde{r}}\right) z_q + \zeta + \frac{s_\eta}{\tilde{r}} z_\eta \right\rangle + o(\tilde{r}^2).
\end{align*}
Next
\begin{align*}
&\im{\left\langle z_p,\left(1 + \frac{\tilde{s}}{\tilde{r}}\right) z_q + \zeta + \frac{s_\eta}{\tilde{r}} z_\eta \right\rangle} \\
= &\sum_{j=1}^n \rho_j(p) \left( \left( 1 + \frac{\tilde{s}}{\tilde{r}} \right) \rho_i(q)\sin{\phi_j}  + \im{\frac{\zeta_i + s_\eta (z_\eta)_i}{\tilde{r}}} \right)\\
= &\sum_{j \not\in I(p)} \rho_j(p) \left( \left( 1 + \frac{\tilde{s}}{\tilde{r}} \right) \rho_i(q)\sin{\phi_j}  + \im{\frac{\zeta_i + s_\eta (z_\eta)_i}{\tilde{r}}} \right) + o\left( \frac{\tilde{r}}{r} \right)
\end{align*}
and
\begin{align*}
&\re{\left\langle z_p,\left(1 + \frac{\tilde{s}}{\tilde{r}}\right) z_q + \zeta + \frac{s_\eta}{\tilde{r}} z_\eta \right\rangle} \\
= &\sum_{j=1}^n \rho_j(p) \left( \left( 1 + \frac{\tilde{s}}{\tilde{r}} \right) \rho_i(q)\cos{\phi_j}  + \re{\frac{\zeta_i + s_\eta (z_\eta)_i}{\tilde{r}}} \right)\\
= &\sum_{j \not\in I(p)} \rho_j(p) \left( \left( 1 + \frac{\tilde{s}}{\tilde{r}} \right) \rho_i(q)\cos{\phi_j}  + \re{\frac{\zeta_i + s_\eta (z_\eta)_i}{\tilde{r}}} \right) + o\left( \frac{\tilde{r}}{r} \right).
\end{align*}
From earlier assumptions $\| \zeta + s_\eta z_\eta \| \leq 2t$, and we can ensure that
\begin{align*}
-\sin{\phi_j} - 2 \frac{r^2}{\tau} \cos{\phi_j} \leq -1
\end{align*}
through further increasing $\bar{\tau}$ and then decreasing $\bar{\epsilon}$, $\bar{R}^{-1}$ and $\bar{\phi}$. So it will be enough for the magnitude of each $\rho_i(q)$ to be large relative to $2t$. In the non-exceptional case we may assume that for all $j \not\in I(p)$ we have $\rho_j(q) \geq 10T\tilde{r}^{-1}$ so that
\begin{align*}
\left( 1 + \frac{\tilde{s}}{\tilde{r}} \right) \rho_j(q) \geq \rho_j(q) \geq 10t\tilde{r}^{-1} \geq 2t\tilde{r}^{-1} \left( 1 + \frac{2r^2}{\tau} \right) 
\end{align*}
since we ensured $r^{-2}\tau > \frac{1}{2}$. It follows that 
\begin{align*}
-\sum_{j \not\in I_p} \rho_j(p) &\left( \left( 1 + \frac{\tilde{s}}{\tilde{r}} \right) \rho_i(q)\sin{\phi_j}  + \im{\frac{\zeta_i + s_\eta (z_\eta)_i}{\tilde{r}}} \right) \\
&- \frac{2r^2}{\tau} \sum_{j \not\in I_p} \rho_j(p) \left( \left( 1 + \frac{\tilde{s}}{\tilde{r}} \right) \rho_i(q)\cos{\phi_j}  + \re{\frac{\zeta_i + s_\eta (z_\eta)_i}{\tilde{r}}} \right)\leq 0.
\end{align*}
This means that the explicit terms are the sum of something non-positive and an error term with order 
\begin{align*}
	 (r+s)\tilde{r} \, o\left( \frac{\tilde{r}}{r} \right) + o(\tilde{r}^2) = o(\tilde{r}^2).
\end{align*}

\begin{flushleft}
	\textbf{Step 5: Bound $\sigma$ in the non-exceptional case and complete the proof}
\end{flushleft}

By combining the last two steps we see that
\begin{align*}
\sigma 
&=  	\frac{1}{2}\left( -\im{ \langle z,w \rangle } - 2 \frac{r^2}{\tau} \re{\langle z,w \rangle} \right) + \frac{r^2}{2\tau} \| w \|^2 + O\left(\tau E^2\right) \\
&\leq 	\frac{r^2}{2\tau} \| w \|^2 + o(\tilde{r}^2) \\
&\leq \frac{\tilde{r}^2}{4\bar{\tau}} + o(\tilde{r}^2)
\leq \frac{\tilde{r}^2}{2} + o(\tilde{r}^2)
\end{align*}
which allows us to bound $\sigma$ in the required fashion unless we have some $j \not\in I_p$ for which $\rho_j(q) < \frac{10T}{\tilde{r}}$, which is the other option allowed by the statement.
\end{proof}

\subsection{Finite intersection dimension}

We can now fit these pieces together to show that property (iv) holds.

\begin{theorem}
$(\mathbb{H}^n,d)$ has finite intersection dimension.
\end{theorem}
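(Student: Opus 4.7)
The plan is to argue by contradiction. Assume $(\mathbb{H}^n,d)$ does not have finite intersection dimension. Then for every $\kappa$ and every prospective $R>1$ there is a configuration $p_1,\ldots,p_\kappa \in \mathbb{H}^n$ with parameters $t(i), r(i)$ satisfying (a)--(c) of the definition and $\bigcap_{i=1}^{\kappa} \partial_{t(i)} B_{r(i)}(p_i) \neq \emptyset$. Using right-invariance I would translate so that $0$ lies in this intersection; then for every $i<j$ we have $p_j \in \partial_{t(i)} B_{r(i)}(p_i)$ and $0 \in \partial_{t(i)} B_{r(i)}(p_i) \cap \partial_{t(j)} B_{r(j)}(p_j)$, which is precisely the setup to which the three separation lemmas apply. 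I would first fix $\bar\tau\in(\tfrac12,1)$ from Lemma \ref{largetau}, then choose $\bar\rho,\bar\phi,\bar\epsilon$ compatibly for Lemmas \ref{smalltau} and \ref{largetau} (taking minima where needed) and take $R$ large enough to activate Lemmas \ref{LSS}, \ref{smalltau} and \ref{largetau} at level $\bar\epsilon$.

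The first reduction is a pruning step via Lemma \ref{sep-pt}(ii) applied to the projections $\hat p_1,\ldots,\hat p_\kappa$, which live on the Euclidean unit sphere because of the structure of $d$. This extracts a sub-sequence of length at least $\kappa/N$, for some fixed $N=N(\bar\rho,\bar\phi,n)$, on which all pairwise projection distances are below $\tfrac12\bigl(1-\sqrt{1-\bar\epsilon^2/4}\bigr)$ and every coordinate angle gap $\phi_j(p,q)$ is below $\bar\phi$. Relabelling as $p_1,\ldots,p_m$, Lemma \ref{LSS} then forces $r(j)\leq\bar\epsilon\,r(i)$ for every $i<j$, so radii decay geometrically; with the uniform choice $T := \max_i t(i)t(i+1)$ the remaining hypotheses of the two small-scale lemmas hold at every pair, at worst after discarding a bounded initial segment on which $r(i)<TR$.

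Now I would run a cascade along consecutive pairs. If ever $|\tau_{p_i}|\leq\bar\tau$, Lemma \ref{smalltau} gives $d(\hat p_i,\hat p_{i+1}) > \bar\rho$, contradicting the pruning; so every $|\tau_{p_i}|\geq\bar\tau$ and Lemma \ref{largetau} applies. Its non-exceptional alternative yields the same contradiction, so each step must realise the exceptional case, producing $j\notin I(p_i)$ with $\rho_j(p_{i+1}) < 10T/r(i+1)$. Using Euclidean closeness of $\hat p_i$ and $\hat p_{i+1}$ combined with $r(i+1)\leq\bar\epsilon\,r(i)$ (so the threshold $10T/r(i+1)$ is strictly looser than $10T/r(i)$), I would verify $I(p_i)\subseteq I(p_{i+1})$, so $|I(p_i)|$ strictly grows at each step. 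Since $|I(\cdot)|\leq n$, the exceptional case can recur at most $n$ times, and a sub-subsequence of length $n+2$ therefore forces the contradiction. Taking $\kappa$ larger than $(n+2)N$ plus the bounded initial segment certifies the property.

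The main obstacle I anticipate is the bookkeeping behind the inclusion $I(p_i)\subseteq I(p_{i+1})$: the set $I(\cdot)$ depends both on the running radius $r(i)$ and on the constant $T$, so one must translate small Euclidean perturbations of $\hat p$ into bounded perturbations of the individual $\rho_j$, then exploit the geometric radius decay to ensure that a coordinate small at threshold $10T/r(i)$ remains small at threshold $10T/r(i+1)$; in the worst case this may require mildly enlarging the numerical constant $10$ in the statement of Lemma \ref{largetau} to absorb the angular perturbation. A related point is fixing $T$ so that it dominates each $t(i)t(i+1)$ uniformly while still allowing $r(i+1)\geq TR$, handled by restricting to a tail of the sequence. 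A secondary, purely combinatorial, worry is the consistent choice of the constants $\bar\tau,\bar\rho,\bar\phi,\bar\epsilon,\bar R$ across all three separation lemmas, which is straightforward by taking minima and maxima.
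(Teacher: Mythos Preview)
Your overall strategy---translate so $0$ lies in the intersection, prune via Lemma~\ref{sep-pt}, use Lemma~\ref{LSS} to force geometric radius decay, then invoke the small-scale lemmas---matches the paper's. There is, however, an omitted step and a genuine gap in the final cascade.

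The omitted step: Lemma~\ref{LSS} carries $r\geq\tilde r$ as a \emph{hypothesis}, so its contrapositive yields $r(j)<\bar\epsilon\,r(i)$ only once $r(i)\geq r(j)$ is already known. You cannot swap the roles of $p$ and $q$, because condition~(c) in the definition of intersection dimension is one-directional (we have $p_j\in\partial_{t(i)}B_{r(i)}(p_i)$ for $i<j$ but not conversely). The paper inserts a preliminary reduction here: if too many radii fail to decrease, the corresponding centres are $\gtrsim r(1)$-separated inside $B_{2r(1)}(p_1)$, and Lemma~\ref{sep-pt}(i) bounds their number.

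The real problem is the inclusion $I(p_i)\subseteq I(p_{i+1})$ on which your strict-growth argument rests; enlarging the constant $10$ cannot rescue it. From $d(\hat p_i,\hat p_{i+1})\leq\bar\rho$ and the ball description (\ref{ball}) one gets $\|z_{p_i}-z_{p_{i+1}}\|\leq\bar\rho$ and hence $|\rho_m(p_i)-\rho_m(p_{i+1})|\leq\bar\rho$, a \emph{fixed} positive constant. On the other hand, once $r(i),r(i+1)\geq TR$ the membership thresholds satisfy $10T/r(i)\leq 10/R$ and $10T/r(i+1)\leq 10/R$; passing from stage $i$ to $i{+}1$ therefore loosens the threshold by at most $10/R$. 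Since $R$ must be taken large for the separation lemmas to apply while $\bar\rho$ is chosen beforehand, one has $\bar\rho\gg 10/R$, so a coordinate with $\rho_m(p_i)<10T/r(i)$ can perfectly well have $\rho_m(p_{i+1})\approx\bar\rho\gg 10T/r(i+1)$, and then $m\notin I(p_{i+1})$. The paper avoids this entirely: it applies Lemma~\ref{largetau} to \emph{every} pair $i<j$ (not just consecutive ones), obtaining for each such pair an index in $I(p_j)\setminus I(p_i)$. This makes $I(p_1),\ldots,I(p_\kappa)$ pairwise distinct subsets of $\{1,\ldots,n\}$, whence $\kappa\leq 2^n$; no nesting is claimed or needed.
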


\begin{proof}
We need to show that there exists $R > 1$ and $\kappa \in \mathbb{N}$ such that if we are given
\begin{enumerate}
\item $t(1),...,t(\kappa) \geq 1$,
\item $r(1),...,r(\kappa)$ such that each $r(i) \geq t(1)...t(i)R$,
\item points $p_1,...,p_\kappa \in \mathbb{H}^n$ such that $p_i \in \bigcap_{j < i} \partial_{t(j)} B_{r(j)}(p_j)$ for $j < i$,
\end{enumerate}
then $\bigcap_{i=1}^\kappa \partial_{t(i)} B_{r(i)}(p_i) = \emptyset$. Let us assume, by using invariance, that $0 \in \bigcap_{i=1}^\kappa \partial_{t(i)} B_{r(i)}(p_i)$ and show that $k$ must be bounded for $R$ sufficiently large.

The logical structure of the proof is to first apply a number of reductions of the form: we have a sequence of length $\kappa$ with a collection of properties $P$, we show that given $\kappa'$ there is $M(\kappa') \in \mathbb{N}$ such that if $\kappa \geq M$ then there is a subsequence of length $\kappa'$ with a property $Q$ in addition to those properties in $P$. It is then sufficient to show $\kappa'$ is bounded, because if so it follows that $\kappa < M(\kappa' + 1)$ where $\kappa'$ is maximal with the properties in $P$ and $Q$ holding. We can then relabel and assume our sequence had property $Q$ in the first place. We finish off by using all the gathered properties to show $\kappa$ is bounded.

\begin{flushleft}
\textbf{Reduction 1:}
\end{flushleft} 

First we show that we can assume the $r(i)$ are decreasing, essentially as in \cite{Hoch1}. Let $\kappa' \leq \kappa$ and assume that $r(i) \geq r(1)$ for all $2 \leq i \leq \kappa'$. By property 3 all these $p_i$ lie inside $\partial_{t(1)} B_{r(1)}(p_1) \subset B_{2r(1)}(p_1)$, this containment is due to property 2. Property 3 also ensures that for pair $i,j$ with $j > i$ there is a point $b \in\partial B_{r(i)}(p_i)$ with $d(b,p_j) \leq t(i)$, and hence by property 2
\begin{align*}
d(p_i,p_j) \geq |d(p_i,b) - d(p_j,b)| \geq r(i) - t(i) \geq r(1)(1 - R^{-1})
\end{align*}
so for all $1 \leq i,j \leq k'$ with $i \neq j$, 
\begin{align*}
d\left(p_1^{-1}\delta_{1/(2r(1))}p_i , \, p_1^{-1}\delta_{1/(2r(1))}p_j\right) \geq \frac{1 - R^{-1}}{2} > 0.
\end{align*}
Since each $p_1^{-1}\delta_{1/(2r(1))}p_i \in B_1(0)$ by Lemma \ref{sep-pt} part (i) $\kappa' \leq N_R$, where $N_R = N(\frac{1 - R^{-1}}{2})$. Note that $N_R$ decreases as $R$ increases.

Clearly, this argument could be repeated with any chain of $\kappa'$ points satisfying the analogous conditions. Therefore if for some $\kappa'' \in \mathbb{N}$ we have $\kappa \geq \kappa''(N_R+1)$ then there must be $i_1 = 1 < i_2 \leq ... < i_{\kappa''} \leq \kappa$ with $r(i_1) \geq r(i_2) \geq ... \geq r(i_{\kappa''})$. This means it suffices for us to prove the claim with the $r(i)$ assumed to be decreasing.

\begin{flushleft}
\textbf{Reduction 2:}
\end{flushleft}

Next we use Lemma \ref{sep-pt} and the large scale separation lemma, \ref{LSS}, to ensure that we can assume that $\hat{p}_1,...,\hat{p}_\kappa$ are all within a distance 
$$\rho(\epsilon) = \frac{1}{2}\left(1 - \sqrt{1-\frac{\epsilon^2}{4}} \right) > 0$$
of one another, here $\epsilon \in (0,1)$, and that for all $j > i$ we have $r(j) \leq \epsilon r(i)$. Note that $\rho(\epsilon)$ decreases as $\epsilon$ decreases.

Let $\kappa' \leq \kappa$, again. By Lemma \ref{sep-pt} part (ii) if $\kappa \geq \kappa' N(\rho(\epsilon))$ then we have a subcollection $I \subset \lbrace 1,...,\kappa \rbrace$ of size at least $\kappa'$ with $d(\hat{p}_i,\hat{p}_j) < \rho(\epsilon)$ for all $i,j \in I$. By taking $R > \bar{R}(\epsilon)$ from Lemma \ref{LSS}, which is assumed to hold from here onwards, the lemma shows that for each pair $i,j \in I$ with $j < i$ we have $r(j) \leq \epsilon r(i)$. $I$ therefore gives the desired subsequence.

\begin{flushleft}
\textbf{Reduction 3:}
\end{flushleft}

Before we do our final reduction, first take $\bar{\tau}$ as given by Lemma \ref{largetau}, and we take this as the input for $\bar{\tau}$ in Lemma \ref{smalltau}. We can then decrease $\epsilon$ so that $\epsilon$ and $\rho(\epsilon)$ are small enough to apply Lemmas \ref{smalltau} and \ref{largetau} with $\bar{\epsilon} = 2\epsilon$ and $\bar{\rho} = \rho(\epsilon)$. Similarly we take $R$ large enough for both lemmas to hold.

It should be clear from an application of the pigeonhole principle that given $\kappa'$ by increasing $\kappa$ we can ensure that there is a subcollection $I \subset \lbrace 1,...,\kappa \rbrace$ of size $\kappa'$ such that for all $i,j \in I$ we have $\max_{1 \leq l \leq n} \phi_l(p_i,p_j) < \bar{\phi}$, where $\bar{\phi}$ is small enough for both lemmas to hold. We can therefore assume the whole sequence also has this property.

\begin{flushleft}
\textbf{$\kappa$ is bounded:}
\end{flushleft}

With all this in hand, we can apply Lemmas \ref{smalltau} and \ref{largetau} to the sequence at will. Let $T = t_1...t_\kappa$ and for each $1 \leq i \leq \kappa$ set $I(p_i) = \lbrace m : \rho_m(p_i) < \frac{10T}{r_i} \rbrace \subseteq \lbrace 1,...,n \rbrace$ as in Lemma \ref{largetau}. By assumption for all $i \neq j$ we have $d(\hat{p}_i,\hat{p}_j) \leq \bar{\rho}$ and so by Lemma \ref{smalltau} we must have $|\tau_{p_i}| > \bar{\tau}$ for all $i \leq \kappa-1$. By applying this fact along with the same assumption Lemma \ref{largetau} ensures that for each pair $i < j \leq \kappa$ there is some number in $I(p_j)$ which is not in $I(p_i)$. In particular, each of the sets $I(p_1),...,I(p_\kappa) \subseteq \lbrace 1,...,n \rbrace$ are pairwise distinct, from which it follows that $\kappa \leq 2^n$.
\end{proof}

Having completed this proof all that remains is property (i), well-separability.

\subsection{Well-separability}

We begin with a preliminary lemma.

\begin{lem} \label{closeball}
Let $p, p' \in \mathbb{H}^n$ and $r > 0$. Then there exists $R > 0$, independent of $p$, $p'$ and $r$, such that if $\rho = d(p,p') > 2Rr$ then there is a point $q$ with $d(p',q) \leq 2r$ for which $B_{r}(q) \subseteq B_\rho(p)$.
\end{lem}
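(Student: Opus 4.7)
The strategy is to use the right-invariance and dilations of $\mathbb{H}^n$ to reduce to a normalized problem, construct $q$ via group multiplication, and verify the required ball inclusion using the explicit formula (\ref{metric}).

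By right-invariance, translate so $p = 0$. Using the $1$-homogeneity $d(\delta_\lambda x, \delta_\lambda y) = \lambda \, d(x, y)$, apply $\delta_{1/\rho}$: with $P' = \delta_{1/\rho}(p') \in \partial B_1(0)$ and $\varepsilon = r/\rho$, the hypothesis $\rho > 2Rr$ becomes $\varepsilon < 1/(2R)$, and the desired conclusion becomes: find $Q$ with $d(P', Q) \leq 2\varepsilon$ and $B_\varepsilon(Q) \subseteq B_1(0)$; then $q := \delta_\rho(Q)$ works. A central fact is that $B_1(0)$ coincides with the Euclidean unit ball in $\mathbb{R}^{2n+1}$ (take $r = 1$ in (\ref{ball})), so $y \in B_1(0)$ is the Euclidean inequality $\|y_z\|^2 + y_\tau^2 \leq 1$.

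I would construct $Q = h \cdot P'$ for a group element $h = h(P')$ depending on $P' = (z', \tau')$. By right-invariance, $d(Q, P') = d(h, 0)$, so we require $d(h, 0) \leq 2\varepsilon$. A natural ansatz is $h = (\varepsilon w, -\mathrm{sgn}(\tau') \varepsilon^2)$ with $w = w(P') \in \mathbb{C}^n$ of bounded norm; a direct computation from (\ref{metric}) confirms $d(h, 0) \leq 2\varepsilon$ whenever $\|w\| \leq \sqrt{15}/2$. Geometrically, the $\tau$-shift by $\varepsilon^2$ addresses the ``polar'' regime, where $\|z'\|$ is small and the unit sphere is Euclidean-flat in the $z$-directions (so moving by $\varepsilon^2$ in $\tau$ costs only $\varepsilon$ in Heisenberg distance yet pulls into the interior by enough that the $\varepsilon$-ball fits), while the $z$-shift by $\varepsilon w$ handles the ``equatorial'' regime, where a triangle-inequality argument suffices.

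The crucial verification is $B_\varepsilon(hP') \subseteq B_1(0)$. I would parameterize $y \in B_\varepsilon(hP')$ as $y = h' \cdot h \cdot P'$ with $h' = (\varepsilon u, \varepsilon^2 v)$, $\|u\|^2 + v^2 \leq 1$, and expand $\|y_z\|^2 + y_\tau^2 - 1$ as a polynomial in $\varepsilon$. The leading $O(\varepsilon)$ contribution is a real-linear functional of $w + u$ whose gradient is determined by $(z', \tau')$; choosing $w$ parallel and opposite to this gradient (with norm at most $1$) makes this term non-positive for every admissible $u$. The $O(\varepsilon^2)$ and higher corrections, which arise from the Heisenberg group law's imaginary part and the $\tau^2$ expansion, are then controlled using $\|z'\|^2 + (\tau')^2 = 1$ and by taking $R$ sufficiently large. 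The main obstacle is ensuring this cancellation is uniform across the whole unit sphere with a single $R$; the delicate point lies in the transitional regime where $\|z'\|$ and $|\tau'|$ are comparable, so that neither the pole-like nor equator-like picture is by itself satisfactory and careful bookkeeping of the error terms is required.
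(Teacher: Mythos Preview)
Your strategy is essentially the paper's: reduce by right-invariance and dilation, then verify the inclusion by expanding the explicit condition (\ref{ball}) as a polynomial in the small/large parameter and choosing the new centre according to whether one is in the equatorial or polar regime of the unit sphere. The paper uses the dual normalization---fixing $r=\tfrac12$ and $p'=0$, keeping $\rho$ large, and expanding the membership condition for $B_{1/2}(q)\subseteq B_\rho(p)$ as a polynomial in $\rho$---and makes an explicit two-case choice of $q$ (namely $q_z$ parallel to $z_p$ with $\|q_z\|=1$ when $\|z_p\|$ is not too small, and $q=(0,1)$ near the pole) rather than your single group-multiplication ansatz $Q=h\cdot P'$; but the underlying computation and the polar/equatorial split are the same, and your ``unified'' $h$ would in practice still require that case analysis to close (note in particular that you need $\|w\|\ge 1$, not $\le 1$, to force the $O(\varepsilon)$ term non-positive for all admissible $u$).
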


\begin{proof}
First of all, using the dilation and isometries of $d$ we may assume that $r = 1/2$ and $p' = 0$. Moreover we assume that $\tau_p \geq 0$ and all coefficients $z_p$ are non-negative reals.

By right invariance the points in $B_{1/2}(q)$ take the form $(w + q_z, \sigma + q_\tau + \frac{1}{2}\im{\langle w, q_z \rangle})$ where $\|w\|^2 + 4\sigma^2 \leq \frac{1}{4}$. Therefore, by (\ref{ball}), it suffices to show that we can choose $R$ large enough such that given $(z_p,\tau_p)$ there is $q = (q_z,q_\tau)$ with $\| q_z \|^2 + q_\tau^2 \leq 1$ such that
\begin{align*}
\frac{\left\| w + q_z - \rho z_p  \right\|^2}{\rho^2} + \frac{\left( \sigma + q_\tau + \frac{1}{2}\im{\langle w, q_z \rangle} - \rho^2 \tau_p - \frac{1}{2} \im{\langle w + q_z, \rho z_p \rangle} \right)^2}{\rho^4} \leq 1
\end{align*}
or equivalently (as $d(0,p) = \rho$) that
\begin{align*}
0 \geq \rho^3 &\left( - 2\re{\langle w + q_z ,z_p  \rangle} + \tau_p \im{\langle w + q_z,z_p  \rangle} \right) \\ 
&+ \rho^2 \left( \| w + q_z \|^2 - 2 \tau_p \left( \sigma + q_\tau + \frac{1}{2}\im{\langle w, q_z \rangle} \right) + \frac{1}{4} (\im{\langle w + q_z,  z_p \rangle})^2 \right) \\
&\phantom{+ \rho^2} -  \frac{\rho}{2} \left( \sigma + q_\tau + \frac{1}{2}\im{\langle w, q_z \rangle} \right)  \im{\langle w + q_z, \rho z_p \rangle} + \left( \sigma + q_\tau + \frac{1}{2}\im{\langle w, q_z \rangle} \right)^2.
\end{align*}
Notice that the coefficients of all powers of $\rho$ have bounds independent of all variables. Let $C > 0$ be strictly greater than the independent bound for the coefficient of $\rho^2$ and ensure that $R > C$. Consider the case when $\|z_p \| \geq \frac{2C}{\rho} > 0$, let us take $q_z = \lambda z_p$ where $\lambda > 0$ is chosen so that $\| q_z \| = 1$, and hence $q_\tau = 0$. Then the coefficient of $\rho^3$ above satisfies
\begin{align*}
- 2\re{\langle w + q_z ,z_p  \rangle} + \tau_p \im{\langle w + q_z,z_p  \rangle} 
&= -2 \langle q_z,z_p \rangle - \langle z_p, 2 \re{w} + \tau_p \im{w} \rangle \\
&\leq \| z_p \| (-2 + \frac{3}{2}) \leq - \frac{C}{\rho}.
\end{align*}
It follows that the polynomial above is bounded above by a quadratic in $\rho$ whose coefficients are independent of all variables, and the leading coefficient of which is negative. Hence we may take $R$ large enough, with the required independence, to ensure that the inequality holds for some appropriate $q$ regardless of the choice of $p$.

In the case where $\|z_p \| \leq \frac{2C}{\rho}$ take $q_z = 0$ and $q_\tau = 1$. Then we have the bounds
\begin{align*}
- 2\re{\langle w + q_z ,z_p  \rangle} + \tau_p \im{\langle w + q_z,z_p  \rangle} \leq  \frac{3C}{\rho}
\end{align*}
and
\begin{align*}
\| w + q_z \|^2 - 2 \tau_p \left( \sigma + q_\tau + \frac{1}{2}\im{\langle w, q_z \rangle} \right) + \frac{1}{4} (\im{\langle w + q_z,  z_p \rangle})^2 \leq \frac{1}{4} - \frac{3}{2} \tau_p + \frac{C^2}{4\rho^2}.
\end{align*}
In particular, we can show that the above polynomial is bounded above by a quadratic whose coefficients are independent of all variables and whose leading coefficient is less than
\begin{align*}
\frac{3C}{\rho} + \frac{1}{4} - \frac{3}{2} \sqrt{1 - \frac{C^2}{\rho^2}} + \frac{C^2}{4\rho^2} \leq -1
\end{align*}
where $R$ has been taken sufficiently large relative to $C$. So, as above we may increase $R$ to ensure the required inequality holds.
\end{proof}

We call a sequence of balls in a metric space \textit{incremental} if the radii are non-increasing and the centre of each ball is not an element any ball earlier in the sequence. In particular, each centre is only in one ball in the sequence.

\begin{prop}
$(\mathbb{H}^n,d)$ is well-separable.
\end{prop}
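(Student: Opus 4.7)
My plan is a two-stage adaptation of the standard Vitali-type argument. In the first stage I extract an \emph{incremental} subcollection $\mathcal{U}' \subseteq \mathcal{U}$ covering $E$: process the balls of $\mathcal{U}$ in non-increasing order of radius and admit $B_{r(x)}(x)$ to $\mathcal{U}'$ exactly when $x$ is not yet covered by a previously admitted ball. Then any $y \in E$ is either admitted through its own ball or was absorbed earlier by a ball of at least equal radius, so $\mathcal{U}'$ covers $E$; moreover its balls satisfy the strong pairwise separation $d(x, x') > \max(r(x), r(x'))$. In the second stage I greedily colour the balls of $\mathcal{U}'$ in the same order, giving each $B = B_{r_0}(x_0)$ a colour avoided by every previously-coloured \emph{conflicting} ball $B' = B_{r'}(x')$ (meaning $r' \geq r_0$ and $d(B, B') < r_0$). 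If this succeeds with a uniform $\chi$, each colour class is well-separated: its $\rmin$ is the radius of its last-added member, and every earlier member of the same colour was placed at distance at least that radius from it.

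Everything therefore hinges on a uniform bound on the number of conflicts of any given $B$. Incrementality together with the conflict condition pins each conflict centre $x'$ to the thin shell $d(x_0, x') \in (r', r' + 2r_0]$. Within a single dyadic radius scale $r' \in [2^k r_0, 2^{k+1} r_0)$, rescaling by $2^{-k} r_0^{-1}$ places all conflict centres of that scale in a bounded region with uniform pairwise separation, so Lemma \ref{sep-pt} gives a bound on the per-scale count that is independent of $k$. The challenge is to sum across the a priori unboundedly many scales.

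To handle the large-scale conflicts I would apply Lemma \ref{closeball} at each one. For $r'_k$ above a threshold proportional to $r_0$ the lemma, applied with $p = x'_k$, $p' = x_0$, and inner radius a suitably small multiple of $r_0$, produces a shadow ball of radius comparable to $r_0$ sitting in $B_{d(x_0,x'_k)}(x'_k)$ (equivalently, in a $2r_0$-enlargement of $B_{r'_k}(x'_k)$) and within $O(r_0)$ of $x_0$. The pairwise incremental separation of the conflict centres then translates into bounded overlap of these shadow balls in the fixed neighbourhood $B_{O(r_0)}(x_0)$ of $x_0$, and a final application of Lemma \ref{sep-pt} there caps the number of large-scale conflicts. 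Combining with the small-scale bound gives the required $\chi$.

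The hardest step is this cross-scale aggregation: the conflict centres live at wildly varying distances from $x_0$, and the doubling-at-each-scale bound does not sum naively. Lemma \ref{closeball} is precisely the tool for transporting distant incremental separations into a local packing bound near $x_0$, and controlling the exact overlap structure of the resulting shadow balls (so that the final doubling step applies) is the technical heart of the argument.
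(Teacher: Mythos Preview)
Your plan is essentially the paper's: extract an incremental cover, greedily colour, and use Lemma \ref{closeball} to transport far-away conflicts into a fixed neighbourhood of the current centre where a doubling/packing count finishes the job. Two points of comparison are worth making.

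First, the dyadic decomposition is unnecessary. The paper treats all conflicting balls uniformly: for each conflict $U$ it picks a boundary point $p'$ of $U$ inside $B_{2r}(h)$ and applies Lemma \ref{closeball} with $p$ the centre of $U$ and $\rho$ the radius of $U$; either $\rho \leq 2^{m+1}r$ already, or one obtains a shadow ball of radius $r$ inside $U$ centred in $B_{4r}(h)$. Either way every member of the resulting collection $\mathcal{W}'$ has radius $\geq r$ and sits in $B_{2^{m+2}r}(h)$, with no need to stratify by scale.

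Second, and more importantly, your sentence ``the pairwise incremental separation of the conflict centres then translates into bounded overlap of these shadow balls'' is the one place where the argument is incomplete as stated. Incremental separation alone (centres not in each other's balls) does \emph{not} give bounded multiplicity in an arbitrary metric space; it is exactly the BCP that does this. The paper makes this explicit: the incremental sequence $U_1,\dots,U_n$ has multiplicity $\leq C$ by the BCP, and since each shadow ball lies inside its corresponding original $U_i$, the shadow collection $\mathcal{W}'$ inherits multiplicity $\leq C$. From there the paper finishes with a one-line Haar-measure count,
\[
N\,\nu(B_r(0)) \;\leq\; \sum_{W\in\mathcal{W}'} \nu(W) \;\leq\; C\,\nu\Bigl(B_{2^{m+2}r}(h)\Bigr) \;\leq\; C D^{m+2}\,\nu(B_r(0)),
\]
using metric doubling. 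Your route via Lemma \ref{sep-pt} can also be completed once the multiplicity bound is in hand (e.g.\ via part (ii) and pigeonhole), but you should invoke the BCP rather than bare incremental separation at that step.
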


\begin{proof}
We mildly adapt a standard technique, see for example \cite{Hoch1} or \cite{Guz1}. For the purposes of this proof we use $\nu$ to denote the right invariant Haar measure on $\mathbb{H}^n$.

Let $C$ be the constant of the Besicovitch covering property and $D$ be the constant for the \textit{metric} doubling property of $d$. Furthermore, take $m \in \mathbb{N}$ large enough so that $2^m > R$, with $R$ as in Lemma \ref{closeball}. In particular, $m$ depends only on the metric $d$. Let $\chi = C D^{m+2} + 1$.

Let $E$ be a finite subset of $\mathbb{H}^n$ and $\mathcal{U}$ be a carpet covering $E$. By applying the BCP via, for example, proposition 2.1 of \cite{Hoch1} we can find an incremental sequence $U_1,...,U_n$ of elements of $\mathcal{U}$ covering $E$. We assign colours $1,2,...,\chi$ to the $U_i$ as follows. Colour $U_1$ as you like, assume we have coloured the $U_i$ for $i \leq k$ and consider $U_{k+1}$. Take $r$ to be the radius of $U_k$ and $h$ to be the centre of $U_{k+1}$, by assumption $U_{k+1} \subseteq B_r(h)$ and each $U_i$ with $i \leq k$ has radius at least $r$. 

Let $\mathcal{W}$ be the collection of balls $U_1,...,U_k$ which are within distance $r$ of $U_{k+1}$, and hence of $B_r(h)$, and let $N = |\mathcal{W}|$. Each $U \in \mathcal{W}$ intersects nontrivially with $B_{2r}(h)$, we may take $p'$ from Lemma \ref{closeball} to be a point in this intersection. We may assume that $p'$ is on the boundary of $U$ because the straight line from $h$ to $p'$ is contained by $B_{2r}(h)$ (the balls are euclidean convex), $p' \in U$ and $h \not\in U$ (by incrementality) so the intermediate value theorem implies there is a point on the boundary of $U$ inside $B_{2r}(h)$. The Lemma \ref{closeball} then ensures that either the radius of $U$ is at most $2^{m+1} r$ we can replace $U$ with a ball of radius $r$ centred in $B_{4r}(h)$, call this new collection of balls $\mathcal{W}'$. In particular, each ball in $\mathcal{W}'$, which is also of size $N$, has radius at least $r$ and is contained by the ball of radius $2^{m+2}r$ about $h$. Therefore by the Besicovitch and metric doubling properties
\begin{align*}
N\nu(B_r(0)) \leq C\nu(B_{2^{m+2}r}(h)) \leq C D^{m+2} \nu(B_r(h)) = C D^{m+2} \nu(B_r(0))
\end{align*}
and so $N \leq C D^{m+2}$. Since $N \leq \chi - 1$ we assign a colour $U_k$ which is different to all those within distance $r$ of $U_k$.

Once the colouring is complete, the collection $\mathcal{V}_j$ of those balls coloured $j$ is well separated precisely because of this property combined with the fact that the radii are decreasing.
\end{proof}

This proposition completes the proof of Theorem \ref{HeisET}.

\singlespacing
\bibliographystyle{alpha} 
\bibliography{Bibliography}

\begin{thebibliography}{Hur44}

\bibitem[Aar97]{Aar1}
J.~Aaronson.
\newblock {\em An introduction to infinite ergodic theory}, volume~50 of {\em
  Mathematical Surveys and Monographs}.
\newblock American Mathematical Society, Providence, RI, 1997.

\bibitem[dG75]{Guz1}
M.~de~Guzm{\'a}n.
\newblock {\em Differentiation of integrals in {$R^{n}$}}.
\newblock Lecture Notes in Mathematics, Vol. 481. Springer-Verlag, Berlin-New
  York, 1975.
\newblock With appendices by Antonio C{\'o}rdoba, and Robert Fefferman, and two
  by Roberto Moriy{\'o}n.

\bibitem[DJ16]{DoolJar1}
A.~H. {Dooley} and K.~{Jarrett}.
\newblock {Non-singular $\mathbb{Z}^d$-actions: an ergodic theorem over
  rectangles with application to the critical dimensions}.
\newblock {\em ArXiv e-prints}, June 2016.

\bibitem[Fel07]{Fel1}
Jacob Feldman.
\newblock A ratio ergodic theorem for commuting, conservative, invertible
  transformations with quasi-invariant measure summed over symmetric
  hypercubes.
\newblock {\em Ergodic Theory Dynam. Systems}, 27(4):1135--1142, 2007.

\bibitem[Hoc10]{Hoch1}
M.~Hochman.
\newblock A ratio ergodic theorem for multiparameter non-singular actions.
\newblock {\em J. Eur. Math. Soc. (JEMS)}, 12(2):365--383, 2010.

\bibitem[Hoc13]{Hoch2}
M.~Hochman.
\newblock On the ratio ergodic theorem for group actions.
\newblock {\em J. Lond. Math. Soc. (2)}, 88(2):465--482, 2013.

\bibitem[Hur44]{Hur1}
Witold Hurewicz.
\newblock Ergodic theorem without invariant measure.
\newblock {\em Ann. of Math. (2)}, 45:192--206, 1944.

\bibitem[Lin01]{Lind1}
E.~Lindenstrauss.
\newblock Pointwise theorems for amenable groups.
\newblock {\em Invent. Math.}, 146(2):259--295, 2001.

\bibitem[LR14]{LeDRig1}
E.~{Le Donne} and S.~{Rigot}.
\newblock {Besicovitch Covering Property for homogeneous distances in the
  Heisenberg groups}.
\newblock {\em ArXiv e-prints}, June 2014.
\newblock To appear in JEMS.

\bibitem[Rig04]{Rig1}
S{\'e}verine Rigot.
\newblock Counter example to the {B}esicovitch covering property for some
  {C}arnot groups equipped with their {C}arnot-{C}arath\'eodory metric.
\newblock {\em Math. Z.}, 248(4):827--848, 2004.

\bibitem[SW92]{SawWhee1}
E.~Sawyer and R.~L. Wheeden.
\newblock Weighted inequalities for fractional integrals on {E}uclidean and
  homogeneous spaces.
\newblock {\em Amer. J. Math.}, 114(4):813--874, 1992.

\end{thebibliography}

\end{document}